\let\emptyset \undefined
\let\ge       \undefined
\let\le       \undefined
\theoremstyle{plain}
\newtheorem{theorem}{Theorem}[section]
\newtheorem{lemma}[theorem]{Lemma}
\newtheorem{proposition}[theorem]{Proposition}
\theoremstyle{remark}
\newtheorem{example}[theorem]{Example}
\newtheorem{remark}[theorem]{Remark}
 \numberwithin{equation}{section}
\newcommand{\SCP}[2]{ {{\rm(SCP)}_{(#1,#2)}}}
\newcommand {\dreihalb} {{{}^{3}\!\!/\!{}_{2}}}
\newcommand {\einhalb} {{{}^{1}\!\!/\!{}_{2}}}
\newcommand {\nhalb} {{{}^{n}\!\!/\!{}_{2}}}
\newcommand {\Mhalb} {{{}^{M}\!\!/\!{}_{2}}}
\newcommand {\pihalbe} {{{}^{\pi}\!\!/\!{}_{2}}}
\newcommand {\ZZ} {\mathbb Z}
\newcommand {\EE} {\mathbb E}
  \newcommand {\la}{\lambda}
  \newcommand {\ga}{\gamma}
  \newcommand {\om}{\omega}
  \newcommand {\Om}{\Omega}
  \renewcommand {\O}{\Omega}
  \newcommand {\al}{\alpha}
  \newcommand {\eps}{\varepsilon}
\let\oldmarginpar\marginpar
\renewcommand\marginpar[1]{\oldmarginpar[\raggedleft\footnotesize #1]%
{\raggedright\footnotesize #1}}
\newcommand{\suchthat}{: \;}
\def\N{{\mathbb N}}
\def\Z{{\mathbb Z}}
\def\R{{\mathbb R}}
\def\C{{\mathbb C}}
\newcommand{\E}{{\mathbb E}}
\renewcommand{\P}{{\mathbb P}}
\newcommand{\F}{{\mathscr F}}
\let\trueH\H
\renewcommand{\H}{{\mathscr H}}
\newcommand{\beq}{\begin{equation}}
\newcommand{\eeq}{\end{equation}}
\newcommand{\bal}{\begin{aligned}}
\newcommand{\eal}{\end{aligned}}
\newcommand{\ben}{\begin{enumerate}}
\newcommand{\een}{\end{enumerate}}
\newcommand{\bit}{\begin{itemize}}
\newcommand{\eit}{\end{itemize}}
\newcommand{\bth}{\begin{theorem}}
\renewcommand{\eth}{\end{theorem}}
\newcommand{\bpr}{\begin{proposition}}
\newcommand{\epr}{\end{proposition}}
\newcommand{\ble}{\begin{lemma}}
\newcommand{\ele}{\end{lemma}}
\newcommand{\bpf}{\begin{proof}}
\newcommand{\epf}{\end{proof}}
\newcommand{\bex}{\begin{example}}
\newcommand{\eex}{\end{example}}
\newcommand{\bre}{\begin{example}}
\newcommand{\ere}{\end{example}}
\renewcommand{\Im}{{\rm Im}\,}
\newcommand{\D}{{\mathsf D}}
\newcommand{\Nul}{{\mathsf N}}
\newcommand{\Ran}{{\mathsf R}}
\newcommand{\calL}{{\mathscr L}}
\newcommand{\n}{\Vert}
\newcommand{\one}{{{\bf 1}}}
\newcommand{\embed}{\hookrightarrow}
\newcommand{\s}{^*}
\newcommand{\lb}{\langle}
\newcommand{\rb}{\rangle}
\newcommand{\limn}{\lim_{n\to\infty}}
\begin{document}

\allowdisplaybreaks

\title[The stochastic Weiss conjecture]
{The stochastic Weiss conjecture for bounded analytic semigroups}

\author{Jamil Abreu}

\address{Instituto de Matem\'atica, Estat\'{\i}stica e Computa\c{c}\~ao Cient\'{\i}fica\\
Universidade Estadual de Campinas, Rua S\'ergio Buarque de Holanda, 651 \\ Campinas SP \\ Brazil}
\email{jamil@ime.unicamp.br}
\author{Bernhard Haak}

\address{Institut de Math\'ematiques de Bordeaux\\
Universit\'e Bordeaux 1\\
351, cours de la Lib\'eration\\
F-33405 Talence cedex\\
France}
\email{bernhard.haak@math.u-bordeaux1.fr}

\author{Jan van Neerven}

\address{Delft Institute of Applied Mathematics\\
Delft University of Technology\\P.O. Box 5031\\2600 GA Delft\\The Netherlands}
\email{J.M.A.M.vanNeerven@TUDelft.nl}

\thanks{The second named author is partially supported by the ANR project ANR-09-BLAN-0058-01.
The third named author is supported by VICI subsidy 639.033.604
of the Netherlands Organisation for Scientific Research (NWO)}

\keywords{Stochastic Weiss conjecture, stochastic evolution equations, invariant measures,
$H^\infty$--calculus, Rademacher interpolation}

\subjclass[2000]{Primary: 93B28, Secondary: 35R15, 46B09, 47B10, 47D06}

\date\today

\begin{abstract}
Suppose $-A$ admits a bounded $H^\infty$-calculus of angle 
less than $\pihalbe$ on a Banach space $E$ which has Pisier's
property $(\alpha)$, let $B$ be a bounded linear operator from a Hilbert space $H$ into the
extrapolation
space $E_{-1}$ of $E$ with respect to $A$, and let $W_H$ denote an $H$-cylindrical Brownian motion. 
Let $\ga(H,E)$ denote the space of all $\ga$-radonifying operators from $H$ to $E$. We prove that
the following assertions are equivalent:
\begin{enumerate}
 \item 
the stochastic Cauchy problem
$dU(t) = AU(t)\,dt + B\,dW_H(t)$
admits an invariant measure on $E$;
 \item $(-A)^{-\einhalb} B \in \ga(H,E)$;
 \item the Gaussian sum $\sum_{n\in\Z} \gamma_{n}  2^{\nhalb} R(2^n,A)B$ converges 
in $\ga(H,E)$ in probability.
\end{enumerate}
This solves the stochastic Weiss conjecture of \cite{HaakNeerven}.
\end{abstract}

\maketitle

\section{Introduction}
Let $A$ be the generator of a  strongly continuous bounded analytic semigroup $S= (S(t))_{t\ge 0}$
on a Banach space $E$, let $F$ be another Banach space,
 and let $C:\D(A)\to F$ be a bounded operator.
If there exists a constant $M\ge 0$ such that
$$ \int_0^\infty \n CS(t)x\n_F^2\,dt \le M^2 \n x\n_E^2, \quad \forall x\in \D(A),$$
an easy Laplace transform
argument shows that $$\sup_{\la >0}\, {\la}^\einhalb \n C R(\la,A)\n_{\calL(E)} \le M.$$
Here, as usual, $R(\la,A) = (\la-A)^{-1}$ denotes the resolvent of $A$ at $\la$.

The celebrated {\em Weiss conjecture} in linear systems theory is the assertion that
the converse also holds. It was solved affirmatively for normal operators
$A$ acting on a Hilbert space by Weiss \cite{Weiss:conjectures}, for generators 
of analytic Hilbert space contraction semigroups with $F = \C$ by Jacob and Partington \cite{JP}, 
and subsequently for operators
admitting a bounded $H^\infty$-calculus of angle $<\pihalbe$ acting on an $L^p$-space, $1<p<\infty$,
by Le~Merdy \cite{LeMerdy:weiss-conj, LeMerdy:square-functions}. Counterexamples to the general statement 
were found by Jacob, Partington and Pott \cite{JPP}, 
Zwart, Jacob, and Staffans \cite{ZwartJacobStaffans:weak-admissibility}, and 
Jacob and Zwart \cite{JacobZwart:counterexable-Weiss-conj}.

Whereas the Weiss conjecture is concerned with {\em observation operators}, 
in the context of stochastic
evolution equations it is natural to consider a `dual' version of the
conjecture in terms of {\em control operators}.
To be more precise, we consider the following situation.
Let $W_H = (W_H(t))_{t\in[0,T]}$ be a cylindrical Brownian motion in a Hilbert space $H$ and
let $B\in \calL(H,E_{-1})$ be a bounded linear operator. Here,
$E_{-1}$ denotes the extrapolation space of $E$ with respect to $A$ (see Subsection \ref{ss:extrapol}).
The {\em stochastic Weiss conjecture}, proposed recently in \cite{HaakNeerven}, is the assertion that,
under suitable assumptions on the linear operator $A$,
the existence of an invariant measure for the linear stochastic Cauchy problem
\[    \left\{
\begin{aligned}
 dU(t) & = AU(t)\,dt + B\,dW_H(t), \qquad t\in[0,T], \\
  U(0) & = 0,
\end{aligned}
\right.  \leqno{\SCP{A}{B}}
    \] 
is equivalent to an appropriate condition on the operator-valued function $\la \mapsto  {\la}^\einhalb R(\la,A)B$.
This conjecture is justified by the observation (cf. Proposition \ref{prop:invariant} below)
that an invariant measure exists if and only if $t\mapsto S(t)B$ defines an element of the space
$\ga(L^2(\R_+;H),E)$ (see Subsection \ref{ss:radonifying} for the definition of this space).

In the paper just cited, an affirmative solution was given in the case where $A$ and $B$ 
are simultaneously diagonalisable.
The aim of this article is to prove the stochastic Weiss conjecture for the class of
operators admitting a bounded $H^\infty$-calculus of angle $<\pihalbe$.
Denoting by $S(E)$ the class of all sectorial operators $-A$ on $E$ of angle $<\pihalbe$ that are injective and
have dense range, our main result reads as follows.

\begin{theorem}\label{thm:equivalence}
Let $E$ have property $(\alpha)$ and assume that $-A \in S(E)$ admits a bounded $H^\infty$--calculus of angle
$<\pihalbe$ on $E$. 
Let  $B: H \to E_{-1}$ be a bounded operator.
Then the following assertions are equivalent:

\ben
\item\label{item:main:existence-inv-measure} $\SCP{A}{B}$ admits an invariant measure on $E$;
\item\label{item:main:A-onehalf-B-in-gamma} $(-A)^{-\einhalb} B \in \ga(H, E)$;
\item\label{item:main:weiss-condition} $\la\mapsto \la^{\einhalb}R(\la,A)B $ defines an element in
$\gamma(L^2(\R_+,\frac{d\la}{\la};H), E)$;
\item\label{item:main:dyadic-condition} for all $\la>0$ we have $R(\la,A)B\in \gamma(H,E)$ and the Gaussian sum 
\[ \sum_{n\in\Z} \gamma_{n}  2^{\nhalb} R(2^n,A)B    \]
converges in $\ga(H,E)$ in probability (equivalently, in $L^p(\Om;\ga(H,E))$ for some (all) $1\le p<\infty$).
\een
\end{theorem}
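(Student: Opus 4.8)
\medskip

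The plan is to prove the chain of implications in the order (2)$\Leftrightarrow$(3), (3)$\Leftrightarrow$(4), and (1)$\Leftrightarrow$(2), relying on the identification (stated in the excerpt via Proposition~\ref{prop:invariant}) of (1) with the assertion that $t\mapsto S(t)B$ defines an element of $\ga(L^2(\R_+;H),E)$. The conceptual engine throughout will be the square-function estimates available for operators with a bounded $H^\infty$-calculus: under property $(\alpha)$, the $H^\infty$-calculus of $-A$ of angle $<\pihalbe$ upgrades to a \emph{Rademacher} (equivalently, for Gaussian variables, $\gamma$-) bounded functional calculus, and one has two-sided estimates of the form
\[
 \norm{x}_{\ga(H,E)} \eqsim \bignorm{\la\mapsto \varphi(\la A)x}_{\ga(L^2(\R_+,\frac{d\la}{\la};H),E)}
\]
for a suitable non-degenerate $\varphi\in H^\infty$ of a sector, valid for $x$ in the appropriate extrapolated space. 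These are the standard discrete/continuous square-function characterizations; the point of property $(\alpha)$ is precisely to make the Gaussian (as opposed to merely $R$-bounded) versions go through and to allow Fubini-type manipulations in $\ga(H,E)$-valued function spaces.

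\medskip

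For (3)$\Leftrightarrow$(4), the idea is that both conditions are square-function conditions attached to the same analytic function, one in its continuous form over $\R_+$ with the multiplicative Haar measure $\frac{d\la}{\la}$, the other in its dyadic (discretized) form over $2^{\Z}$. Writing $\psi(z) = z^{\einhalb}(1+z)^{-1}$ (or a similar non-degenerate function bounded on a sector of angle $<\pihalbe$ containing the spectrum of $-A$), one has $\la^{\einhalb}R(\la,A)B = \psi(\la(-A)^{-1})\cdot(-A)^{-\einhalb}B \cdot(\text{harmless bounded factor})$ after a resolvent identity bookkeeping, so the function in (3) is, up to an isomorphism, $\la\mapsto \psi(\la(-A)^{-1})y$ for $y=(-A)^{-\einhalb}B$; the sum in (4) is its sampling at $\la=2^n$. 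The equivalence of the continuous $\ga$-norm and the discrete Gaussian sum is then exactly the discretization lemma for square functions (the dyadic decomposition of $\R_+$, plus the fact that on each dyadic annulus the function varies by a bounded $\gamma$-multiplier because of analyticity and the $\gamma$-bounded calculus). This also yields (2)$\Leftrightarrow$(3): by the square-function characterization of the $\ga(H,E)$-norm quoted above, $y\in\ga(H,E)$ iff $\la\mapsto\psi(\la(-A)^{-1})y$ lies in $\ga(L^2(\R_+,\frac{d\la}{\la};H),E)$, which is (3) rewritten; so (2) and (3) are two faces of the same statement once the bookkeeping identifying the two analytic functions is done.

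\medskip

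For (1)$\Leftrightarrow$(2), I would start from the operator $t\mapsto S(t)B$ and split it as $S(t)B = S(t)(-A)^{\einhalb}\cdot(-A)^{-\einhalb}B$, noting that $(-A)^{\einhalb}B$ need not make sense as it stands, so the rigorous statement is that $t\mapsto t^{-\einhalb}\cdot (t^{\einhalb}(-A)^{\einhalb}S(t))\cdot (-A)^{-\einhalb}B$, and $t^{\einhalb}(-A)^{\einhalb}S(t) = \phi(tA)$ with $\phi(z)=(-z)^{\einhalb}e^{z}$ bounded and non-degenerate on a sector of angle $<\pihalbe$. Hence $t\mapsto S(t)B\in\ga(L^2(\R_+;H),E)$ is, after the substitution $s=t$ (so $L^2(\R_+;H)$ with Lebesgue measure becomes, via $\phi$ carrying the extra $t^{-1/2}$, the multiplicative-measure space), equivalent to $\la\mapsto\phi(\la(-A)^{-1})y\in\ga(L^2(\R_+,\frac{d\la}{\la};H),E)$ for $y=(-A)^{-\einhalb}B$, which by the square-function characterization is again $y\in\ga(H,E)$. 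Combined with Proposition~\ref{prop:invariant} this gives the equivalence with (1).

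\medskip

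The main obstacle, and where the bulk of the technical work lies, is making the square-function characterization of $\ga(H,E)$ genuinely applicable here: the vector $y=(-A)^{-\einhalb}B$ a priori lives in $\ga(H,E_{-1})$ or even only in $\calL(H,E_{-1})$, not in $\ga(H,E)$, so one cannot simply quote the two-sided estimate — one must first establish the \emph{finiteness} side (that the relevant square function is finite) from the hypothesis, and only then invoke the converse estimate to land back in $\ga(H,E)$. Concretely this means one should run the argument in the extrapolated scale, proving the square-function equivalences for $\ga(H,E_{\theta})$-valued data uniformly in $\theta$ near $0$, or work with truncated/approximated operators (e.g. replacing $B$ by $(\mu-A)^{-\eps}(-A)^{\eps}B$-type regularizations and passing to the limit). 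The density of the range of $-A$ and the injectivity (the hypothesis $-A\in S(E)$) are what make these approximation arguments converge, and property $(\alpha)$ is what keeps all the Gaussian-sum manipulations — in particular the dyadic discretization and the interchange of $\ga(H,\cdot)$ with $\ga(L^2,\cdot)$ — legitimate.
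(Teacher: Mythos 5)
Your overall architecture matches the paper's: identify (a) with $t\mapsto S(t)B\in\ga(L^2(\R_+;H),E)$, reduce everything to square-function conditions of the form $t\mapsto\phi(-tA)B\in\ga(L^2(\R_+,\frac{dt}{t};H),\dot E_{-\einhalb})$, and discretize dyadically. Your closing paragraph correctly identifies one real difficulty (that $(-A)^{-\einhalb}B$ a priori lives only in $\calL(H,E_{-1})$, so the two-sided square-function characterization cannot simply be quoted); the paper resolves exactly this by proving a dedicated proposition via Rademacher interpolation in the scale $\langle L^2(\Om;\dot E_{-1}),L^2(\Om;E)\rangle_{\einhalb}=L^2(\Om;\dot E_{-\einhalb})$, and by a Laplace-transform lemma to pass from the function $z^{\einhalb}e^{-z}$ (coming from the semigroup) to the single admissible test function $z^{\einhalb}(1+z)^{-\dreihalb}$ for which the converse square-function estimate is actually proved. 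Your sketch of (a)$\Leftrightarrow$(b)$\Leftrightarrow$(c) is therefore essentially the paper's route, modulo quoting a ``one nondegenerate $\psi$ suffices'' principle that the paper instead establishes for one concrete $\psi$.

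The genuine gap is in (c)$\Leftrightarrow$(d), specifically the direction from the continuous condition to the discrete one. You assert that both are ``square-function conditions attached to the same analytic function'' and that their equivalence is ``exactly the discretization lemma,'' with the variation over each dyadic annulus controlled by a $\ga$-bounded multiplier. The resolvent identity does give uniformly $\ga$-bounded factors relating $t^{\einhalb}R(t,A)B$ at two points of the same dyadic interval, and this is enough for the direction discrete $\Rightarrow$ continuous (the paper's averaging operators $U_{nm}^{(M)}$, its Lemma on dyadic means, and a $\ga$-Fatou argument). But it does \emph{not} give the converse: membership of $\la\mapsto\la^{\einhalb}R(\la,A)B$ in $\ga(L^2(\R_+,\frac{d\la}{\la};H),E)$ is a statement about an equivalence class, and controlling the Gaussian sum of the \emph{point samples} at $\la=2^n$ by that norm is not a consequence of comparability of samples within an annulus, nor of controlling averages by point values (which goes the wrong way). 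The paper needs a genuinely different tool here: it applies the already-proven implication (b)$\Rightarrow$(c) to the rotated operators $e^{\pm i\theta}A$, so that the square function is finite along two rays $e^{\mp i\theta}\R_+$, and then invokes a Poisson-formula/Hardy-space sampling lemma on a strip (Lemma~\ref{lem:Poisson}, extended to $\ga$-spaces by Kalton--Weis) to bound $\sum_{n}\n f(2^n)\n^2$ by the $L^2$ norms on the boundary rays. This analytic-continuation step is the novel content of the implication (b)$\Rightarrow$(d) and is absent from your plan; without it (or a substitute), the implication (c)$\Rightarrow$(d) is not established.
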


Since $B$ maps into the extrapolation space $E_{-1}$, some care has to be taken in giving a 
rigorous interpretations of these assertions. The details will be explained below.

In the special case when $E$ is a Hilbert space and $H$ is a separable 
Hilbert space with orthonormal basis $(h_k)_{k\ge 1}$,
condition \ref{item:main:existence-inv-measure} is equivalent to 
\begin{equation}\label{eq:invariant-Hilbert} 
\sum_{k= 1}^\infty \int_0^\infty \n S(t)Bh_k\n^2\,dt < \infty,
\end{equation}
and condition \ref{item:main:dyadic-condition} reduces to 
\begin{equation}\label{eq:dyad} \sum_{k=1}^\infty \sum_{n\in\Z}  2^{n} \n R(2^n,A)Bh_k\n^2 < \infty.
\end{equation}
Compared to the Weiss conjecture, we see that a uniform boundedness condition on ${\la}^\einhalb R(\la,A)B$
gets replaced by a (dyadic) square summability condition along $(h_k)_{k\ge 1}$ in \eqref{eq:dyad}; 
this is consistent
with the square summability condition along $(h_k)_{k\ge 1}$ in \eqref{eq:invariant-Hilbert}.

All spaces are real. When we use spectral arguments, we turn to the complexifications
without further notice. 

\section{Preliminaries}\label{sec:prelim}
In this section we collect some notations and results that will be used in the proof of 
Theorem \ref{thm:equivalence}.

\subsection{Property $(\al)$} A {\em Rademacher sequence} is a sequence of independent random
variables taking the values $\pm 1$ with probability $\einhalb$.
Let
$(r_j')_{j=1}^\infty$ and $(r_k'')_{k=1}^\infty$
be Rademacher sequences on probability spaces $(\Om',\P')$ and 
$(\Om'',\P'')$, and let $(r_{jk})_{j,k=1}^\infty$
be a doubly indexed Rade\-macher sequence on a probability space $(\Om,\F,\P)$.
It is important to observe that the sequence $(r_j'r_k'')_{j,k=1}^\infty$ is not
a Rademacher sequence. By standard randomisation techniques one proves (see, e.g., 
\cite{NW-alpha}):
 
\begin{proposition}\label{prop:Pisier}
For a Banach space $E$ 
the following assertions are equivalent:
\ben
\item[\rm(1)]
there exists a constant $C\ge 0$ such that for all finite sequences
$(a_{jk})_{j,k=1}^n$ in $\R$ and $(x_{jk})_{j,k=1}^n$ in $E$ we have
$$ \E'\E'' \Big\n \sum_{j,k=1}^n a_{jk} r_j' r_k'' x_{jk}\Big\n^2
\le C^2 \big(\max_{1\le j,k\le n}|a_{jk}| \big)^2\E'\E'' \Big\n \sum_{j,k=1}^n  r_j' r_k'' x_{jk}\Big\n^2;
$$
\item[\rm(2)] there exists a constant $C\ge 0$ such that for all finite sequences 
$(x_{jk})_{j,k=1}^n$ in $E$ we have
$$ \frac1{C^2}\E \Big\n \sum_{j,k=1}^n  r_{jk} x_{jk}\Big\n^2
\le \E' \E'' \Big\n \sum_{j,k=1}^n r_j' r_k'' x_{jk}\Big\n^2
\le C^2 \E \Big\n \sum_{j,k=1}^n  r_{jk} x_{jk}\Big\n^2.
$$
\een
\end{proposition}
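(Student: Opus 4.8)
The statement to prove is Proposition \ref{prop:Pisier}, characterizing property $(\alpha)$ through two equivalent randomization conditions. Here is my plan.

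\medskip

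The plan is to prove the two implications (2)$\Rightarrow$(1) and (1)$\Rightarrow$(2) using the standard contraction principle for Rademacher sums together with the key observation that a ``randomized'' doubly-indexed sign sequence $(r_j' r_k'')$ can be compared with a genuine doubly-indexed Rademacher sequence $(r_{jk})$ precisely when $E$ has property $(\alpha)$. For (2)$\Rightarrow$(1): given scalars $(a_{jk})$, set $M = \max_{j,k}|a_{jk}|$; I would first use the right-hand inequality in (2) to pass from the $(r_j'r_k'')$-sum to the $(r_{jk})$-sum, then apply the scalar multiplier version of the Kahane contraction principle (multipliers $a_{jk}/M \in [-1,1]$) for the genuine Rademacher sequence $(r_{jk})$ to pull out the factor $M^2$ at the cost of a universal constant, and finally use the left-hand inequality in (2) to return to the $(r_j'r_k'')$-sum. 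Composing these three steps gives (1) with a constant depending only on $C$.

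\medskip

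For the converse (1)$\Rightarrow$(2), the idea is a Fubini/averaging argument: realize the doubly-indexed Rademacher variables $r_{jk}$ as products $r_j' r_k'' \varepsilon_{jk}$ where $(\varepsilon_{jk})$ is an independent doubly-indexed Rademacher family on a third probability space; this is legitimate because for fixed signs $\varepsilon_{jk} = \pm 1$ the family $(r_j' r_k'' \varepsilon_{jk})$ has the same joint distribution as $(r_j' r_k'')$, hence after integrating over $(\varepsilon_{jk})$ one still only sees the law of $(r_j' r_k'')$. Concretely, $\mathbb{E}\|\sum r_{jk} x_{jk}\|^2 = \mathbb{E}_\varepsilon \mathbb{E}' \mathbb{E}'' \|\sum r_j' r_k'' (\varepsilon_{jk} x_{jk})\|^2$, and then one applies (1) with the multiplier matrix $(\varepsilon_{jk})$ (whose entries have modulus $1$) to bound this above and below, for each fixed $\varepsilon$, by a constant times $\mathbb{E}'\mathbb{E}''\|\sum r_j'r_k'' x_{jk}\|^2$; integrating over $\varepsilon$ removes the $\varepsilon$-dependence and yields both inequalities in (2). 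A symmetric role-swap (writing $r_j' r_k'' = r_{jk} \varepsilon_{jk}$ and using (1) in the reverse direction) handles whichever of the two inequalities is not immediate.

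\medskip

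The main technical point — and the step I would be most careful about — is the precise form of the contraction principle being invoked. In the real scalar case the Kahane contraction principle states that for $|a_{jk}| \le 1$ one has $\mathbb{E}\|\sum a_{jk} r_{jk} x_{jk}\|^2 \le \mathbb{E}\|\sum r_{jk} x_{jk}\|^2$ with constant $1$; applied with the roles of the matrices reversed (multiplying by $a_{jk}^{-1}$ where nonzero, after a standard perturbation/limiting argument to handle vanishing entries) it also gives the reverse inequality up to the factor $(\min |a_{jk}|)^{-2}$, but in (1) only the one-sided estimate with $\max|a_{jk}|$ is asserted, so no lower bound on the $a_{jk}$ is needed. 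I would therefore organize the proof so that each application of the contraction principle is a genuine one-sided estimate with $[-1,1]$-valued multipliers, and track the accumulated constants; since the paper only needs qualitative equivalence, the exact value of the constant is immaterial, and the whole argument is a routine combination of Fubini and the contraction principle once the product representation $r_{jk} \stackrel{d}{=} r_j' r_k'' \varepsilon_{jk}$ is set up correctly. This is why the statement can reasonably be left to the cited reference \cite{NW-alpha} with only the remark about $(r_j'r_k'')$ not being a Rademacher sequence as a hint.
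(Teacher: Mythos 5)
The paper itself offers no proof of this proposition (it is delegated to the reference \cite{NW-alpha} as a ``standard randomisation'' fact), so your proposal fills in a folklore argument rather than competing with one in the text. The skeleton you describe is the standard one and, with the correction below, it works: (2)$\Rightarrow$(1) by sandwiching the contraction principle for the genuine doubly indexed Rademacher family $(r_{jk})$ between the two inequalities of (2) (applied to the vectors $a_{jk}x_{jk}$), and (1)$\Rightarrow$(2) by the Fubini identity $\E\,\Vert\sum r_{jk}x_{jk}\Vert^2=\E_\varepsilon\E'\E''\Vert\sum \varepsilon_{jk}r_j'r_k''x_{jk}\Vert^2$ followed by a two-sided application of (1) with the unimodular multipliers $\varepsilon_{jk}$, the lower bound coming from applying (1) to the vectors $\varepsilon_{jk}x_{jk}$ with multipliers $\varepsilon_{jk}$.

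However, your justification of the key distributional identity is wrong, and wrong in a way that would trivialise the proposition if it were right. You claim that for \emph{fixed} signs $\varepsilon_{jk}=\pm1$ the family $(\varepsilon_{jk}r_j'r_k'')$ has the same joint law as $(r_j'r_k'')$, and deduce that averaging over $\varepsilon$ ``still only sees the law of $(r_j'r_k'')$''. This is false: the law of $(r_j'r_k'')_{j,k}$ is supported on the rank-one sign matrices, a set that is not invariant under entrywise multiplication by an arbitrary sign matrix (for $n=2$, flipping only the $(1,1)$ entry destroys the relation $\epsilon_{11}\epsilon_{22}=\epsilon_{12}\epsilon_{21}$). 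Worse, if your claim were true it would give $(r_j'r_k'')\overset{d}{=}(r_{jk})$, i.e.\ that $(r_j'r_k'')$ \emph{is} a doubly indexed Rademacher sequence --- exactly the misconception the paper warns against just before the proposition --- and condition (2) would then hold with $C=1$ in every Banach space. The correct justification conditions the other way round: for fixed $(\omega',\omega'')$ the numbers $r_j'(\omega')r_k''(\omega'')$ are deterministic signs, and multiplying an independent i.i.d.\ Rademacher family $(\varepsilon_{jk})$ by deterministic signs preserves its law; hence $(\varepsilon_{jk}r_j'r_k'')\overset{d}{=}(r_{jk})$ jointly, which is all the Fubini step needs. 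The same misconception reappears in your proposed ``role swap'' $r_j'r_k''=r_{jk}\varepsilon_{jk}$ with $(r_{jk})$ and $(\varepsilon_{jk})$ independent, which is impossible for the same reason; fortunately that step is superfluous, since the two-sided unimodular application of (1) already yields both inequalities of (2).
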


A Banach space $E$ is said to have {\em property $(\al)$} if it satisfies the above equivalent conditions.
Examples of spaces having this property are Hilbert spaces and the spaces $L^p(\mu)$ with $1\le p<\infty$.
Property $(\al)$ was introduced by Pisier \cite{Pisier:lust}, who proved that a Banach lattice 
has property $(\al)$ if and only if it has finite
cotype. In particular, the space $c_0$ fails property $(\al)$.

\subsection{$\ga$-Boundedness}
A family $\mathscr{T}\subseteq \calL(E,F)$ is called {\em $\gamma$-bounded}
if there exists a constant $C\ge 0$ such that for all finite sequences $(T_n)_{n=1}^N$ in $\mathscr{T}$ and 
$(x_n)_{n=1}^N$ in $E$ we have
$$ \E \Big\n \sum_{n=1}^N \ga_n T_n x_n \Big\n^2 \le C^2 \E \Big\n
\sum_{n=1}^N \ga_n x_n \Big\n^2.$$ 
The least admissible constant in this inequality is called the {\em $\ga$-bound} of $\mathscr{T}$.

By letting $N{=}1$ it is seen that $\gamma$-bounded families are uniformly bounded. For Hilbert spaces $E$ and $F$,
the notions of uniform boundedness and $\gamma$-boundedness are equivalent. 
For detailed expositions on
$\ga$-boundedness and the closely related notion of $R$-boundedness, as well as 
for references to the extensive literature we refer the reader
to \cite{ClementDePagterSukochevWitvliet, DHP, KunstmannWeis:Levico, Weis:FM}.

\subsection{$\ga$-Radonifying operators}\label{ss:radonifying}
Let $\H$ 
be a Hilbert space and $E$ a Banach space.
For a finite rank operator $T: \H \to E$ of the form
$$ T = \sum_{n=1}^N h_n \otimes x_n,$$
where $(h_n)_{n=1}^N$ is an orthonormal sequence in $\H$ and $(x_n)_{n=1}^N$ is a sequence in $E$,
we define
\begin{equation}\label{eq:finite-rank} \n T\n_{\ga(\H,E)} 
:= \Big\n\sum_{n=1}^N \ga_n x_n\Big\n_{L^2(\Om;E)}.\end{equation}
Here, $(\ga_n)_{n=1}^N$ is a sequence of independent standard Gaussian random variables 
on a probability space $(\Om,\P)$.
The Banach space $\ga(\H,E)$ is defined as the completion of the linear space of finite 
rank operators with respect to this norm.

The following {\em $\ga$-Fatou lemma} holds (see \cite{KaltonWeis:square-function-est, NeeCanberra}). 
Suppose $(T_n)_{n=1}^\infty$ is a bounded sequence in $\ga(\H,E)$
and $T\in\calL(\H,E)$ is an operator such that $$\limn \lb T_n h, x\s\rb = \lb Th,
x\s\rb,\quad \forall h\in \H, \ x\s\in E\s.$$ Then, if $E$ does not contain a closed subspace 
isomorphic to $c_0$, we have $T\in \ga(\H,E)$ and
\begin{equation}\label{eq:gFatou}\n T\n_{\ga(\H,E)}\le \liminf_{n\to\infty}\n T_n\n_{\ga(\H,E)}.\end{equation}

The {\em Kalton--Weis extension theorem} \cite[Proposition 4.4]{KaltonWeis:square-function-est} 
(see also \cite{NeeCanberra}) 
asserts that if $T: H_1\to H_2$ is a bounded linear operator, then the 
tensor extension $T: H_1\otimes E\to H_2\otimes E$, 
$$ T(h\otimes x) := Th\otimes x$$
extends to a bounded operator (with the same norm) from $\ga(H_1,E)$ to $\ga(H_2,E)$. 

The {\em Kalton--Weis multiplier theorem} \cite[Proposition 4.11]{KaltonWeis:square-function-est} 
(see \cite{NeeCanberra} for the formulation given here) 
asserts that if $(X,\mu)$ is a $\sigma$-finite measure space,
$E$ and $F$ are Banach spaces with $F$ not containing a closed subspace 
isomorphic to $c_0$, and if $M: X \to \calL(E,F)$
is measurable with respect to the strong operator topology and has $\ga$-bounded range, then 
the mapping 
$$(\one_B\otimes h)\otimes x \mapsto (\one_B\otimes h) \otimes Mx$$ has a unique 
extension to a bounded linear operator 
from $\ga(L^2(X,\mu;H),E)$ into $\ga(L^2(X,\mu;H),F)$ (with norm equal to the $\ga$-bound of the range of $M$).

Below we shall use (see \cite{NW-alpha}) that a Banach space $E$ has property $(\alpha)$ 
if and only if, whenever $\H_0$ and $\H_1$ are nonzero
Hilbert spaces, the mapping $(h_0\otimes h_1)\otimes x \mapsto h_0 \otimes (h_1\otimes x)$ 
extends to an isomorphism
of Banach spaces
$$ \ga(\H_0\widehat\otimes \H_1, E) \simeq \ga(\H_0, \ga(\H_1,E)).$$
Here,   $\H_0\widehat\otimes \H_1$ denotes the Hilbert space completion of the 
algebraic tensor product $\H_0\otimes\H_1$. We will be particularly interested in the case 
$\H_0=L^2(\R_+,\frac{dt}{t})$, in which case the above isomorphism then takes the form
$$\ga(L^2(\R_+,\tfrac{dt}{t};H),E)\simeq \ga(L^2(\R_+,\tfrac{dt}{t}),\ga(H,E)).$$

\subsection{Stochastic integration}

Let $H$ be a Hilbert space
and let $(\Om,\P)$ be a probability space. A {\em cylindrical Brownian motion in $H$}
is a mapping $W_H: L^2(\R_+;H)\to L^2(\O)$ such that $W_H f$ is a centred Gaussian random variable for all 
$f\in L^2(\R_+;H)$ and 
$$ \E (W_H f \cdot W_H g) = [f,g]_{L^2(\R_+;H)}$$
for all $f,g\in L^2(\R_+;H)$. Such a mapping is linear and bounded. 

A function $\Phi: \R_+\to \calL(H,E)$ is said to be {\em stochastically integrable} with respect to $W_H$
if it is scalarly square integrable, i.e., for all $x\s\in E\s$ the function $\Phi\s x\s:t\mapsto 
\Phi\s(t)x\s$ belongs to $L^2(\R_+;H)$, and 
for all Borel sets $B\subseteq \R_+$ there exists a random variable $X_B\in L^2(\Om; E)$ 
such that 
$$ \int_B  \Phi\s x\s\,dW_H := W_H(\one_B  \Phi\s x\s) = \lb X_B, x\s\rb, \quad \forall x\s\in E\s.$$
In that case we define $$\int_B \Phi\,dW_H := X_B.$$

The following result was proved in \cite{vanNeervenWeis:stoch-int}.
\begin{proposition}
A scalarly square integrable function  $\Phi: \R_+\to \calL(H,E)$ is stochastically 
integrable with respect to $W_H$
if and only if there exists an operator $R\in \ga(L^2(\R_+;H),E)$ such that 
$R\s x\s = \Phi\s x\s$ in $L^2(\R_+;H)$ for all $x\s\in E\s$.
\end{proposition}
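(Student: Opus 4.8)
The plan is to route both conditions through the single $E$-valued Gaussian variable obtained by integrating over all of $\R_+$, passing between the operator-theoretic and probabilistic pictures via the Gaussian-sum description of the $\ga$-norm \eqref{eq:finite-rank}. Throughout write $\H=L^2(\R_+;H)$, fix an orthonormal basis $(f_n)_{n\ge1}$ of $\H$, and set $\ga_n:=W_H f_n$. Since $W_H$ is a linear isometry into the Gaussian subspace $\G:=\overline{W_H(\H)}$ of $L^2(\Om)$ and $(f_n)$ is orthonormal, $(\ga_n)_{n\ge1}$ is a sequence of independent standard Gaussian variables, which I may use in \eqref{eq:finite-rank}; note also that $W_H g=\sum_n [g,f_n]_{\H}\,\ga_n$ for every $g\in\H$.

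For the implication that existence of $R$ gives stochastic integrability, fix a Borel set $B\subseteq\R_+$ and let $P_B\in\calL(\H)$ be the orthogonal projection $g\mapsto\one_B g$. By the ideal property of $\ga(\H,E)$ under right composition with bounded operators, $RP_B\in\ga(\H,E)$, so its Gaussian sum $X_B:=\sum_n \ga_n\,RP_B f_n$ converges in $L^2(\Om;E)$. A direct computation using $R\s x\s=\Phi\s x\s$, the self-adjointness of $P_B$, and the expansion of $W_H$ noted above gives $\lb X_B,x\s\rb=W_H(\one_B\Phi\s x\s)$ for all $x\s\in E\s$; thus $\Phi$ is stochastically integrable with $\int_B\Phi\,dW_H=X_B$.

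For the converse, assume $\Phi$ is stochastically integrable and put $X:=X_{\R_+}$, so that $\lb X,x\s\rb=W_H(\Phi\s x\s)\in\G$ for every $x\s$; in particular $X\in L^2(\Om;E)$ is a centred $E$-valued Gaussian variable, and I define $R\in\calL(\H,E)$ by the Bochner integral $Rf:=\E\bigl(X\,W_H f\bigr)$. The covariance identity for $W_H$ yields $\lb Rf,x\s\rb=[\Phi\s x\s,f]_{\H}$, i.e. $R\s x\s=\Phi\s x\s$, so it only remains to show that $R$ is $\ga$-radonifying. Let $\F_N:=\si(\ga_1,\dots,\ga_N)$ and $\F_\infty:=\si(\ga_n:n\ge1)$. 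Because each $\lb X,x\s\rb$ lies in $\G$, the family $\{\lb X,x\s\rb\}\cup\{\ga_n\}$ is jointly Gaussian, and the scalar regression formula applied coordinatewise gives $\E(X\mid\F_N)=\sum_{n=1}^N \ga_n\,\E(X\ga_n)=\sum_{n=1}^N \ga_n\,Rf_n$. Since $X$ is strongly measurable and each $\lb X,x\s\rb$ is $\F_\infty$-measurable, $X$ is $\F_\infty$-measurable, whence $\E(X\mid\F_\infty)=X$. By the vector-valued martingale convergence theorem the partial sums $\sum_{n=1}^N \ga_n Rf_n=\E(X\mid\F_N)$ converge to $X$ in $L^2(\Om;E)$; by \eqref{eq:finite-rank} this says exactly that $R\in\ga(\H,E)$, with $\n R\n_{\ga(\H,E)}=\n X\n_{L^2(\Om;E)}$.

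The main obstacle is this last step: upgrading the bounded operator $R$ to a $\ga$-radonifying one. The mere existence of the $E$-valued integral $X$ must be converted into genuine convergence of the Gaussian series $\sum_n \ga_n Rf_n$, and the clean route is to recognise its partial sums as the conditional expectations $\E(X\mid\F_N)$ and invoke martingale convergence in $L^2(\Om;E)$; the identification of those conditional expectations is precisely where the joint Gaussianity coming from $\lb X,x\s\rb\in\G$ is used.
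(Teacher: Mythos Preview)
The paper does not give its own proof of this proposition; it is stated with a reference to \cite{vanNeervenWeis:stoch-int} and used as a black box. Your argument is correct and is essentially the standard route to the result in that reference: the forward direction follows from the right-ideal property of $\ga(\H,E)$ together with the Gaussian-sum realisation \eqref{eq:finite-rank}, and for the converse you identify the partial sums $\sum_{n=1}^N \ga_n Rf_n$ with the conditional expectations $\E(X\mid\F_N)$ and invoke vector-valued martingale convergence to obtain convergence in $L^2(\Om;E)$, hence $R\in\ga(\H,E)$. The joint Gaussianity step (so that the scalar regression formula applies coordinatewise) and the $\F_\infty$-measurability of $X$ via Pettis are handled correctly. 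One tacit assumption is the existence of a countable orthonormal basis of $L^2(\R_+;H)$, i.e.\ separability of $H$; this is harmless here and is implicitly assumed throughout the paper.
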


\subsection{Existence, uniqueness and invariant measures}\label{ss:extrapol}
Let $A$ be the generator of a strongly continuous semigroup $S = (S(t))_{t\ge 0}$ on a 
Banach space $E$. We define $E_{-1}:= (E\times E)/\mathscr{G}(A)$, where 
$\mathscr{G}(A) = \{(x,Ax): \ x\in \D(A)\}$ is the graph of $A$. The mapping 
\[    i_{-1}: x\mapsto (0,x)+\mathscr{G}(A)    \]
defines a dense embedding $i_{-1}$ from $E$ into $E_{-1}$. 
We shall always identify $E$ with it image $i_{-1}(E)$ in $E_{-1}$.

The operator $A$ extends to a bounded operator $A_{-1}$ from
$E$ into $E_{-1}$ by defining \[    A_{-1} x:= (-x,0)+\mathscr{G}(A).    \] 
To see that this indeed gives an extension
of $A$, note that for $x\in \D(A)$ we have 
\[    i_{-1} Ax = (0,Ax)+\mathscr{G}(A) = (-x,0)+\mathscr{G}(A) = A_{-1}x.    \]
It is easy to see that the operator $A_{-1}$, which is densely defined and closed as a linear operator 
in $E_{-1}$ with domain $\D(A_{-1}) = E$, generates a strongly continuous semigroup 
$S_{-1} = (S_{-1}(t))_{t\ge 0}$ on $E_{-1}$ which satisfies $S_{-1}(t)i_{-1}x 
= i_{-1}S(t)x$ for all $x\in E$ and $t\ge 0$.

For a bounded operator $B: H\to E_{-1}$ we are interested in $E$-valued solutions 
to the stochastic evolution equation
$\SCP{A}{B}$. To formulate this problem rigorously, we first consider the problem
$\SCP{{A_{-1}}}{B}$ in $E_{-1}$:
\[    \left\{
\begin{aligned}
 dU_{-1}(t) & = A_{-1}U_{-1}(t)\,dt + B\,dW_H(t), \qquad t\in[0,T], \\
  U_{-1}(0) & = 0.
\end{aligned}
\right.  \leqno{{\SCP{A_{-1}}{B}}}
    \]
Here, as always, $W_H$ is a cylindrical Brownian motion in $H$, and we adopt the standard notation 
$W_H(t)h:=W_H(\one_{(0,t)}\otimes h)$.

An $E$-valued process $U = (U(t))_{t\in [0,T]}$ is called a 
{\em weak solution} of $\SCP{A}{B}$ if the $E_{-1}$-valued process $i_{-1} U = (i_{-1}U(t))_{t\in [0,T]}$
is a weak solution of  $\SCP{A_{-1}}{B}$, i.e., for all $x_{-1}^*\in \D(A_{-1}^*)$ 
the function $t\mapsto \lb  i_{-1} U(t), A_{-1}^* x_{-1}^*\rb$ is integrable almost surely and
if for each $t\in [0,T]$ we have, almost surely,
\[     \lb  i_{-1} U(t),x_{-1}^*\rb = \int_0^t \lb  i_{-1} U(s), A_{-1}^* x_{-1}^*\rb\,ds
+ W_H(t) B\s x_{-1}\s.    \]

An $E$-valued process $U$ is called a 
{\em mild solution} of $\SCP{A}{B}$ if the $E_{-1}$-valued process $i_{-1} U$
is a mild solution of  $\SCP{A_{-1}}{B}$, i.e., if the function $t\mapsto S_{-1}(t)B$ is stochastically
integrable in $E_{-1}$ with respect to $W_H$ and if 
for each $t\in [0,T]$ we have, almost surely,
\begin{equation}\label{eq:mild}  i_{-1} U(t) = \int_0^t S_{-1}(t-s)B\,dW_H(s). \end{equation}

The following proposition is an extension of the main result of \cite{vanNeervenWeis:stoch-int} 
(where the case $B\in \calL(H,E)$ was considered). 

\begin{proposition}\label{prop:existence} Under the above assumptions, 
for an $E$-valued process $U$ the following assertions are equivalent:
\begin{enumerate}
 \item $U$ is weak solution of $\SCP{A}{B}$;
 \item $U$ is mild solution of $\SCP{A}{B}$;
 \item there exists an operator $R_T\in \ga(L^2(0,T;H),E)$ such that for all 
$x_{-1}^* \in E_{-1}^*$
\begin{equation}\label{eq:RT}    
 R_T^*  (i_{-1}\s x_{-1}\s) =  B\s S_{-1}^*(\cdot)x_{-1}\s \ \hbox{ in } L^2(0,T;H). 
\end{equation}
\end{enumerate}
\end{proposition}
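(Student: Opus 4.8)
The plan is to prove the equivalences $(1)\Leftrightarrow(2)\Leftrightarrow(3)$ by reducing everything to a statement about the operator $t\mapsto S_{-1}(t)B$ belonging to $\ga(L^2(0,T;H),E)$, and then patching together the three characterisations.

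\textbf{Step 1: Weak and mild solutions coincide.} First I would show $(1)\Leftrightarrow(2)$. The point is that, by definition, $i_{-1}U$ is a weak (resp. mild) solution of $\SCP{A_{-1}}{B}$, and $B\in\calL(H,E_{-1})$, so this is exactly the case covered by the main result of \cite{vanNeervenWeis:stoch-int} applied to the generator $A_{-1}$ on the Banach space $E_{-1}$ and the \emph{bounded} operator $B:H\to E_{-1}$. Concretely, $t\mapsto S_{-1}(t)B$ is stochastically integrable with respect to $W_H$ in $E_{-1}$ if and only if the weak solution exists, and the two solution concepts then agree because the weak formulation integrated against $x^*_{-1}\in\D(A_{-1}^*)$ is equivalent — via the usual convolution/Fubini argument and density of $\D(A_{-1}^*)$ — to the mild formulation \eqref{eq:mild}. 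This gives $(1)\Leftrightarrow(2)$ with essentially no new work beyond citing the cited theorem for the pair $(A_{-1},B)$.

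\textbf{Step 2: Reformulate via a radonifying operator.} Next I would show $(2)\Leftrightarrow(3)$. By the Proposition of \cite{vanNeervenWeis:stoch-int} quoted in the stochastic-integration subsection, stochastic integrability of $\Phi=S_{-1}(\cdot)B:\R_+\to\calL(H,E_{-1})$ over $[0,T]$ is equivalent to the existence of $R_T\in\ga(L^2(0,T;H),E_{-1})$ with $R_T^*x_{-1}^* = B^*S_{-1}^*(\cdot)x_{-1}^*$ in $L^2(0,T;H)$ for all $x_{-1}^*\in E_{-1}^*$. The remaining subtlety is that $(3)$ demands $R_T$ take values in $E$ (not merely $E_{-1}$), i.e. that $R_T = i_{-1}\circ \tilde R_T$ for some $\tilde R_T\in\ga(L^2(0,T;H),E)$, and that \eqref{eq:RT} hold with $i_{-1}^*x_{-1}^*$ as stated. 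So I would argue: if $U$ is an $E$-valued process with $i_{-1}U$ mild solution of $\SCP{A_{-1}}{B}$, then by definition $i_{-1}U(t) = \int_0^t S_{-1}(t-s)B\,dW_H(s)$, which forces the random variables $\int_B S_{-1}(\cdot)B\,dW_H$ to lie in $i_{-1}(E)$; running the Proposition of \cite{vanNeervenWeis:stoch-int} \emph{inside} the closed subspace $i_{-1}(E)\cong E$ produces $\tilde R_T\in\ga(L^2(0,T;H),E)$ with $\tilde R_T^*(i_{-1}^*x_{-1}^*) = B^*S_{-1}^*(\cdot)x_{-1}^*$. Conversely, given such $\tilde R_T$, one defines $U(t)$ via the $E$-valued stochastic integral and checks the weak formulation pointwise against $x^*_{-1}\in\D(A_{-1}^*)$.

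\textbf{Step 3: The main obstacle.} The step I expect to require the most care is Step 2, specifically verifying that the $E$-valued operator $\tilde R_T$ genuinely represents the mild solution, i.e. that \eqref{eq:mild} holds with the $E$-valued integral. The issue is that a priori we only control the integrals $\int_B S_{-1}(\cdot)B\,dW_H$ as $E_{-1}$-valued random variables, and we must upgrade "each such random variable lies in the closed subspace $i_{-1}(E)$" to "the representing operator $R_T$ factors through $i_{-1}$". This is handled by the fact that $\ga(L^2(0,T;H),\cdot)$ is well-behaved under closed subspaces: if $Y\subseteq Z$ is closed and $R\in\ga(L^2(0,T;H),Z)$ has the property that $\idual{Rf}{z^*}{}=0$ whenever $z^*$ annihilates $Y$, then $R$ takes values in $Y$ and defines an element of $\ga(L^2(0,T;H),Y)$ of the same norm (this follows from the finite-rank approximation defining the $\ga$-norm together with a weak-closure argument, and does not need property $(\alpha)$ or a $c_0$-exclusion). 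Applying this with $Y=i_{-1}(E)$, $Z=E_{-1}$ yields $\tilde R_T$, and the identity \eqref{eq:RT} is then just the transcription of the $E_{-1}$-level identity through the isometry $i_{-1}:E\to i_{-1}(E)$. The rest (integrability of $s\mapsto\idual{i_{-1}U(s)}{A_{-1}^*x_{-1}^*}{}$ and the weak equation) is the standard Fubini/convolution computation and I would not belabour it.
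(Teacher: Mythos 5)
Your Step 1 and your (c)$\Rightarrow$(b) direction are fine in spirit and broadly match what the paper does (the paper leaves (a)$\Leftrightarrow$(b) to the reader and constructs the solution in (c)$\Rightarrow$(b) by a Gram--Schmidt argument on $\overline{\Ran(R_T\s)}$). The problem is Step 3, which is where you yourself locate the main difficulty, and the lemma you invoke there does not apply. Your reduction rests on viewing $i_{-1}(E)$ as a \emph{closed} subspace $Y$ of $Z=E_{-1}$ and using the annihilator characterisation $Y={}^\perp(Y^\perp)$ together with the fact that a $\ga$-radonifying operator with range in a closed subspace is $\ga$-radonifying into that subspace. But $i_{-1}$ is a \emph{dense} embedding of $E$ into $E_{-1}$ with a strictly finer norm on its image: $i_{-1}(E)=\D(A_{-1})$ is dense and (for unbounded $A$) proper, hence not closed in $E_{-1}$. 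Its annihilator is $\{0\}$, so the condition ``$\lb R f, z\s\rb=0$ whenever $z\s$ annihilates $Y$'' is vacuous and gives no information; and even granting that every $R_{-1,T}f$ lands in $i_{-1}(E)$, membership in $\ga(L^2(0,T;H),E_{-1})$ with range in $i_{-1}(E)$ does not transfer to membership in $\ga(L^2(0,T;H),E)$ ``with the same norm'', since the $E$-norm is not the restriction of the $E_{-1}$-norm. This is precisely the trap the paper flags explicitly: a density argument fails because $F=\{i_{-1}\s x_{-1}\s\}$ is only weak$\s$-dense in $E\s$.

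The paper's route around this uses the hypothesis that $U(T)$ is an honest $E$-valued random variable in an essential way: one first shows $U(T)$ is Gaussian \emph{in $E$} by citing a result of Brzezniak--van Neerven to upgrade Gaussianity of $\lb U(T), i_{-1}\s x_{-1}\s\rb$ along the weak$\s$-dense family $F$ to Gaussianity of $\lb U(T),x\s\rb$ for all $x\s\in E\s$. This produces a reproducing kernel Hilbert space $\H_T\hookrightarrow E$, and the key computation identifies $\n R_T\s i_{-1}\s x_{-1}\s\n_{L^2(0,T;H)}$ with $\n i_T\s i_{-1}\s x_{-1}\s\n_{\H_T}$. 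The weak$\s$-density problem is then resolved inside the \emph{reflexive} space $\H_T$, where weak$\s$-dense range of $i_T\s\circ i_{-1}\s$ becomes norm-dense, allowing $R_T\s$ to be extended to all of $E\s$ by a genuine Cauchy-sequence argument. Finally, that $R_T^{**}$ maps into $E$ is checked explicitly on step functions via $\int_a^b S_{-1}(T-s)Bh\,ds\in\D(A_{-1})=E$, and $\ga$-radonification of $R_T$ follows because $R_T R_T\s=i_T i_T\s$ is the covariance of the $E$-valued Gaussian variable $U(T)$. If you want to repair your proof, you need some substitute for this RKHS/reflexivity mechanism; the closed-subspace factorisation cannot do the job.
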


\begin{proof}
 Let us prove the equivalence (b)$\Leftrightarrow$(c), because this is what we need in the sequel.
The proof of (a)$\Leftrightarrow$(b) is left to the reader.

(b)$\Rightarrow$(c): By assumption there is a strongly measurable random variable $U(T):\Omega
\to E$ such that in $E_{-1}$ we have $$i_{-1} U(T) = \int_0^T S_{-1}(T-s)B\,dW_H(s).$$
For all $ x_{-1}\s\in E_{-1}\s$, the random variable $\lb U(T), i_{-1}\s x_{-1}\s\rb$ is Gaussian.
Since $F := \{  i_{-1}\s x_{-1}\s: \  x_{-1}\s\in E_{-1}\s\}$ is weak$\s$-dense 
in $E\s$ and the range of $U(T)$ is separable up to a null set, 
from \cite[Corollary 1.3]{BrzNee} it follows
that $\lb U(T),x\s\rb$ is Gaussian for all $x\s\in E\s$, i.e., $U(T)$ is Gaussian distributed. 

By the results of \cite{vanNeervenWeis:stoch-int} the operator $R_{-1,T}:L^2(0,T;H) \to E_{-1}$, defined by
$$ R_{-1,T} f = \int_0^T S_{-1}(T-s)B f(s)\,ds,$$ belongs to $\ga(L^2(0,T;H),E_{-1})$.
Define the linear operator $R_T\s : F\to L^2(0,T;H)$ by
$$ R_T\s  i_{-1}\s x_{-1}\s := R_{-1,T}\s x_{-1}\s.$$
Then,
\begin{equation}\label{eq:RTs}
\begin{aligned}
 \n R_T\s  i_{-1}\s x_{-1}\s \n_{L^2(0,T;H)}^2  
& = \n R_{-1,T}\s x_{-1}\s \n_{L^2(0,T;H)}^2
\\ & = \int_0^T \n B\s S_{-1}\s(T-s) x_{-1}\s\n_H^2\,ds 
\\ & = \E \Big| \int_0^T  B\s S_{-1}\s(T-s) x_{-1}\s \,dW_H(s)\Big|_H^2 
\\ & = \E \lb U(T), i_{-1}\s x_{-1}\s\rb^2 = \n i_{T}\s i_{-1}\s x_{-1}\s \n_{\H_T}^2,
\end{aligned}
\end{equation} 
where $i_{T}$ is the canonical inclusion mapping
of the reproducing kernel Hilbert space $\H_T$, 
associated with the Gaussian random variable $U(T)$, into $E$.
This shows that $R_T\s$ is well-defined and bounded on $F$.
At this point we would like to use a density argument to infer that $R_T^*$ extends to 
a bounded operator from $E\s$ into $L^2(0,T;H)$
which satisfies
\begin{equation}\label{eq:covar} 
\n R_T\s x\s \n_{L^2(0,T;H)}^2 = \n i_{T}\s x\s\n_{\H_T}^2, \quad \forall x\s\in E\s.
\end{equation}
However, this will not work, since $F$ is only weak$\s$-dense in $E\s$. The correct way to proceed is as follows.
The injectivity of $i_{-1}\circ i_T$ implies that $i_T\s \circ i_{-1}\s$ has weak$\s$-dense range
in $\H_T$. As $\H_T$ is reflexive, this range is weakly dense and therefore, by the Hahn-Banach theorem,
it is dense. Fixing an arbitrary $x\s\in E\s$, we may choose a sequence $(x_{-1,n}^*)_{n\ge 1}$ 
in $E_{-1}^*$ such that 
$i_T\s i_{-1}\s x_{-1,n}^* \to i_T\s x\s$ in $\H_T$. By \eqref{eq:RTs} the sequence
$(R_T\s i_{-1}^* x_{-1,n}^*)_{n\ge 1}$ is Cauchy in $L^2(0,T;H)$ and converges to some $f_{x\s}\in L^2(0,T;H)$.
It is routine to check that $f_{x\s}$ is independent of the approximating sequence. Thus we may
extend the $R_T\s$ to $E\s$ by putting 
$$R_T\s x\s:= f_{x\s}.$$ 
Clearly, for this extended operator the identity \eqref{eq:covar} is obtained. 

We claim that its adjoint $R_T^{**}: L^2(0,T;H)\to E^{**}$ actually takes values in $E$,
and that this operator is the one we are looking for. 

First, for $f = \one_{(a,b)}\otimes h$ and $x\s\in E\s$ of the form
$x\s = i_{-1}\s x_{-1}\s$ we have
\begin{align*}\lb x\s, R_T^{**} f\rb & = [ R_T\s i_{-1}\s x_{-1}\s, f]_{L^2(0,T;H)} 
\\ & = \int_a^b \lb S_{-1}(T-s) Bh, x_{-1}\s\rb\,ds  = \lb i_{-1}y, x_{-1}\s\rb = \lb y,x\s\rb,
\end{align*}
where $y =  \int_a^b S_{-1}(T-s)Bh\,ds$ belongs to $\D(A_{-1}) = E$.
It follows that $R_T^{**}$ maps the dense subspace  of all $H$-valued step 
functions into $E$,
and therefore it maps all of $L^2(0,T;H)$ into $E$.

Viewing $R_T: = R_T^{**}$ as an operator from $L^2(0,T;H)$ to $E$, we finally note that
the identity \eqref{eq:covar} 
exhibits $R_T \circ R_T\s  = i_T\circ i_T\s$ as the covariance operator of the $E$-valued Gaussian 
random variable
$U(T)$. This means that $R_T$ is $\ga$-radonifying as an operator from $L^2(0,T;H)$ to $E$ 
(see, e.g., \cite{NeeCanberra}). 

(c)$\Rightarrow$(b): We follow the ideas of \cite{vanNeervenWeis:stoch-int}.
We have $L^2(0,T;H) = \Nul(R_T) \oplus \overline{\Ran(R_T\s)}$.
By the general theory
of $\ga$-radonifying operators, $G:=\overline{\Ran(R_T\s)}$ is separable (see \cite{NeeCanberra}).
By a Gram-Schmidt argument we may select a sequence $(x_{-1,n}\s)_{n\ge 1}$ 
in $E_{-1}\s$ such that $(g_n)_{n\ge 1}:= (R_T\s i_{-1}^*x_{-1,n}\s)_{n\ge 1}$ 
is an orthonormal basis for $G$. Then the Gaussian random variables
$$ \gamma_n := \int_0^T B\s S_{-1}\s(T-s)x_{-1,n}^*\,dW_H(s)$$
are independent and normalised.
Since $R_T$ is $\ga$-radonifying, the $E$-valued random variable 
$$ U(T) := \sum_{n\ge 1} \ga_n R_T g_n$$ 
is well-defined, and it is easy to check that it satisfies \eqref{eq:mild} with $t$ replaced by $T$.
By well-known routine arguments, this is enough to assure that $\SCP{A}{B}$ has a mild solution $U$ in $E$.   
\end{proof} 

Suppose now that the problem $\SCP{A_{-1}}{B}$ admits a mild solution $U_{-1}$ in $E_{-1}$
and let $\mu_{-1,t}$ denote the distribution of the random variable $U_{-1}(t)$.
The weak limit $\mu_{-1,\infty}$ of these measures, if it exists, is called the (minimal) {\em invariant
measure} associated with $\SCP{A_{-1}}{B}$. Thus, by definition, the invariant measure, if it exists,
is the unique Radon probability measure on $E_{-1}$ which satisfies
$$ \int_{E_{-1}} f\,d\mu_{-1,\infty} 
= \lim_{t\to\infty} \int_{E_{-1}} f\,d\mu_{-1,t}, \quad \forall f\in C_{\rm b}(E_{-1}).$$  
For an explanation of this terminology and a more systematic approach we refer the reader to
\cite{DaPZab}. This references deals with Hilbert spaces $E$; extensions of the linear theory
to the Banach space setting were presented in \cite{GoldysNeerven, vanNeervenWeis:asymptotic-scp}. 

A Radon probability measure $\mu$ on $E$ is an {\em invariant measure} for $\SCP{A}{B}$ if
the image measure $i_{-1}(\mu)$ on $E_{-1}$ is an invariant measure for $\SCP{A_{-1}}{B}$.
Extending a result from \cite{vanNeervenWeis:asymptotic-scp} (where the case
$B\in \calL(H,E)$ was considered) we have the following result. A proof is obtained along the same
line of reasoning as in the previous proposition and is left as an exercise to the reader.

\begin{proposition}\label{prop:invariant} Under the above assumptions, 
for a Radon probability measure $\mu$ on $E$ the following assertions are equivalent:
\begin{enumerate}
 \item $\SCP{A}{B}$ admits an invariant measure;
 \item there exists an operator $R_\infty\in \ga(L^2(\R_+;H),E)$ such that for all 
$x_{-1}^* \in E_{-1}^*$
\begin{align}\label{eq:Rinfty}  
 R_\infty^*  (i_{-1}\s x_{-1}\s) =  B\s S_{-1}^*(\cdot)x_{-1}\s \ \hbox{ in } L^2(\R_+;H).    
\end{align} 
\end{enumerate}
\end{proposition}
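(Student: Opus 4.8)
The plan is to repeat, almost verbatim, the argument used for Proposition~\ref{prop:existence}, with the finite horizon $T$ replaced by the limit $t\to\infty$. Throughout, let $\mu_{-1,t}$ denote the (centred Gaussian) distribution of $U_{-1}(t)$ on $E_{-1}$ and let $C_{-1,t}\in\calL(E_{-1}^*,E_{-1})$ be its covariance operator; by the isometry defining the stochastic integral, $\langle C_{-1,t}x_{-1}^*,x_{-1}^*\rangle=\int_0^t\n B^* S_{-1}^*(s)x_{-1}^*\n_H^2\,ds$ for all $x_{-1}^*\in E_{-1}^*$, and this quantity is nondecreasing in $t$ since the integrand is nonnegative.

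$(b)\Rightarrow(a)$: Given $R_\infty\in\ga(L^2(\R_+;H),E)$ satisfying \eqref{eq:Rinfty}, the operator $R_\infty R_\infty^*\colon E^*\to E$ is, by the defining property of $\ga$-radonifying operators (see \cite{NeeCanberra}), the covariance operator of a centred Gaussian Radon measure $\mu$ on $E$. I would show that $i_{-1}(\mu)=\mu_{-1,\infty}$. Indeed $i_{-1}(\mu)$ is centred Gaussian on $E_{-1}$ with covariance $i_{-1}R_\infty R_\infty^* i_{-1}^*$, and \eqref{eq:Rinfty} together with the monotone convergence theorem yields, for every $x_{-1}^*\in E_{-1}^*$,
\begin{align*}
 \langle i_{-1}R_\infty R_\infty^* i_{-1}^* x_{-1}^*,\, x_{-1}^*\rangle
   & = \n R_\infty^* i_{-1}^* x_{-1}^*\n_{L^2(\R_+;H)}^2
     = \int_0^\infty \n B^* S_{-1}^*(s)x_{-1}^*\n_H^2\,ds \\
   & = \lim_{t\to\infty}\langle C_{-1,t}x_{-1}^*,\, x_{-1}^*\rangle.
\end{align*}
Hence the covariances $C_{-1,t}$ converge, at the level of quadratic forms, to the covariance of the Radon Gaussian measure $i_{-1}(\mu)$. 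By the standard asymptotic theory of linear stochastic Cauchy problems (\cite{DaPZab,GoldysNeerven,vanNeervenWeis:asymptotic-scp}) this forces uniform tightness of $(\mu_{-1,t})_{t\ge0}$ (each $\mu_{-1,t}$ being dominated, in the Gaussian sense, by $i_{-1}(\mu)$), so that $\mu_{-1,t}\to i_{-1}(\mu)$ weakly; therefore $i_{-1}(\mu)$ is the invariant measure of $\SCP{A_{-1}}{B}$ and $\mu$ is an invariant measure of $\SCP{A}{B}$.

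$(a)\Rightarrow(b)$: Let $\mu$ be an invariant measure of $\SCP{A}{B}$, so that $i_{-1}(\mu)=\mu_{-1,\infty}$ is a centred Gaussian Radon measure on $E_{-1}$; its covariance satisfies $\langle C_{-1,\infty}x_{-1}^*,x_{-1}^*\rangle=\int_0^\infty\n B^* S_{-1}^*(s)x_{-1}^*\n_H^2\,ds$, which is therefore finite for every $x_{-1}^*\in E_{-1}^*$. Let $\H_\infty\hookrightarrow E$ be the reproducing kernel Hilbert space of $\mu$, with inclusion operator $i_\infty$, so that $i_\infty i_\infty^*$ is the covariance of $\mu$ and $i_{-1}i_\infty i_\infty^* i_{-1}^*=C_{-1,\infty}$. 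Now I would follow the proof of Proposition~\ref{prop:existence} step by step: on the weak$^*$-dense subspace $F:=\{i_{-1}^* x_{-1}^*\colon x_{-1}^*\in E_{-1}^*\}$ of $E^*$ define $R_\infty^* i_{-1}^* x_{-1}^*:=B^* S_{-1}^*(\cdot)x_{-1}^*$; then $\n R_\infty^* i_{-1}^* x_{-1}^*\n_{L^2(\R_+;H)}^2=\langle C_{-1,\infty}x_{-1}^*,x_{-1}^*\rangle=\n i_\infty^* i_{-1}^* x_{-1}^*\n_{\H_\infty}^2$. Since $i_{-1}\circ i_\infty$ is injective, $i_\infty^*\circ i_{-1}^*$ has weak$^*$-dense, hence (by reflexivity of $\H_\infty$ and the Hahn--Banach theorem) norm-dense, range in $\H_\infty$; this allows one to extend $R_\infty^*$ to a bounded operator on all of $E^*$ satisfying $\n R_\infty^* x^*\n_{L^2(\R_+;H)}^2=\n i_\infty^* x^*\n_{\H_\infty}^2$. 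Pairing the bidual operator $R_\infty^{**}\colon L^2(\R_+;H)\to E^{**}$ with $x^*=i_{-1}^* x_{-1}^*$ and a step function $f=\one_{(a,b)}\otimes h$ gives $\langle x^*,R_\infty^{**}f\rangle=\langle y,x^*\rangle$ with $y=\int_a^b S_{-1}(s)Bh\,ds\in\D(A_{-1})=E$, whence $R_\infty^{**}$ maps the $H$-valued step functions, and hence all of $L^2(\R_+;H)$, into $E$. Viewing $R_\infty:=R_\infty^{**}$ as an operator into $E$, the relation $R_\infty R_\infty^*=i_\infty i_\infty^*$ exhibits it as a Gaussian covariance operator, so $R_\infty\in\ga(L^2(\R_+;H),E)$, and it satisfies \eqref{eq:Rinfty} by construction.

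The step I expect to be the main obstacle is, in the direction $(b)\Rightarrow(a)$, the passage from pointwise convergence of the covariance forms $\langle C_{-1,t}\cdot,\cdot\rangle$ to genuine weak convergence of the Gaussian measures $\mu_{-1,t}$: one needs the hypothesis that $R_\infty$ is $\ga$-radonifying \emph{into $E$} precisely to know that the limiting quadratic form is the covariance of an honest Radon Gaussian measure, which is what supplies the uniform tightness required for Prokhorov's theorem. Every other ingredient is a transcription of the proof of Proposition~\ref{prop:existence}; the one delicate point inherited from there — that $R_\infty^{**}$ takes values in $E$ rather than merely in $E^{**}$ — is handled as before via the fact that finite-length time integrals of $S_{-1}(\cdot)Bh$ belong to $\D(A_{-1})=E$.
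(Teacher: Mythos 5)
Your proposal is correct and coincides with what the paper intends: the authors explicitly leave this proposition as an exercise, stating that it follows the same line of reasoning as Proposition~\ref{prop:existence}, and your argument is a faithful transcription of that proof with $T$ replaced by $\infty$ (including the delicate point that $R_\infty^{**}$ lands in $E$ via integrals of $S_{-1}(\cdot)Bh$ lying in $\D(A_{-1})=E$). The additional limiting step in (b)$\Rightarrow$(a) --- deducing weak convergence of $\mu_{-1,t}$ from monotone convergence of the covariance forms to a Radon Gaussian covariance --- is exactly the standard argument from the cited references \cite{GoldysNeerven, vanNeervenWeis:asymptotic-scp}.
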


Formally, \eqref{eq:RT} and \eqref{eq:Rinfty} express that the
operators $R_T$ and $R_\infty$ are integral operators with kernels
$S(\cdot)B$. Strictly speaking this makes no sense, since $B$ maps
into $E_{-1}$ rather than into $E$. It will be convenient, however, to
refer to $R_T$ and $R_\infty$ as the operators `associated with
$S(\cdot)B$' and we shall do so in the sequel without further warning.

\subsection{Sectorial operators and $H^\infty$-calculus}
For $\theta\in (0,\pi)$ let 
$$\Sigma_{\theta}:= \{z\in \C\setminus\{0\}:\ |\arg(z)| < \theta\}$$
denote the open sector of angle $\theta$. 
A densely defined closed linear operator $-A$ in a Banach space $E$ is called {\em sectorial 
(of angle $\theta\in (0,\pi)$)} if the spectrum of $-A$ is contained in 
$\overline{\Sigma_\theta}$
and 
$$ \sup_{z\not\in \overline{\Sigma_{\theta}}}  \n z \,(z+A)^{-1}\n < \infty.$$
The infimum of all $\theta\in (0,\pi)$ such that $-A$ is sectorial of angle $\theta$
is called the {\em angle of sectoriality} of $-A$.

It is well known (see \cite[Theorem II.4.6]{EngelNagel}) that 
$-A$ is sectorial of angle less than $\pihalbe$ 
if and only if $A$ generates a strongly continuous bounded analytic semigroup on $E$.  

Following \cite{KunstmannWeis} we denote by $S(E)$ the set of all
densely defined, closed, injective operators in $E$ that are sectorial
of angle less than $\pihalbe$ and have dense range.  The injectivity
and dense range conditions are not very restrictive: if $A$ is a
sectorial operator on a reflexive Banach space $E$, then we have the
direct sum decomposition
$$ E = \Nul(A) \oplus \overline{\Ran(A)}$$
in terms of the null space and closure of the range of $A$.
In that case, the part of $A$ in $\overline{\Ran(A)}$ is sectorial and satisfies the additional
injectivity and dense range conditions.

Let $-A\in S(E)$ be sectorial of angle $\theta\in (0,\pihalbe)$ and
fix $\eta\in (\theta,\pihalbe)$.  We denote by
$H_0^\infty(\Sigma_\eta)$ the linear space of all bounded analytic
functions $f:\Sigma_\eta\to \C$ with some power type decay at zero and
infinity, i.e., for which there exists an $\eps>0$ such that $$|f(z)|
\le C|z|^\eps / (1+|z|)^{2\eps}, \quad \forall z\in \Sigma_\eta.$$ For
such functions we may define a bounded operator
$$ f(-A) = \frac{1}{2\pi i}\int_{\partial \Sigma_{\eta'}} f(z) (z+A)^{-1}\,dz,$$
with $\eta'\in (\theta,\eta)$. The operator $-A$ is said to have a bounded $H^\infty$-calculus 
if there exists a constant $C$, independent of $f$, such that
$$ \n f(-A)\n \le C\n f\n_\infty, \quad \forall f\in H_0^\infty(\Sigma_\eta).$$
The infimum of all admissible $\eta$ is called the {\em angle} of the $H^\infty$-calculus of $-A$.

Examples of operators $A$ for which $-A$ has a bounded $H^\infty$-calculus of angle less than $\pihalbe$ 
are generators of strongly continuous analytic contraction semigroups on Hilbert spaces
and second order elliptic operators on $L^p$-spaces whose coefficients satisfy mild regularity assumptions.
We refer to \cite{DHP, Haase:Buch, KunstmannWeis:Levico} for more details and examples. 

If $-A \in S(E)$ has a bounded $H^\infty$-calculus, the mapping $f\mapsto f(-A)$ extends (uniquely, in some natural sense
discussed in \cite{KunstmannWeis:Levico}) 
to a bounded algebra homomorphism from $H^\infty(\Sigma_\eta)$ into $\calL(E)$ of norm at most $C$.
A proof the following result can be found in  \cite{KunstmannWeis:Levico}.

\begin{proposition}\label{prop:gamma-bounded}
Suppose that $-A \in S(E)$ admits a bounded $H^\infty$-calculus of angle $\eta<\pihalbe$ and let 
$\eta< \eta'<\pihalbe$. Then 
$-A$ is {\em $\gamma$-sectorial} of any angle $\eta<\eta'<\pihalbe$, i.e., 
the family 
$$\{z \,(z+A)^{-1}: \ z\not\in \overline{\Sigma_{\eta'}}\}$$ is $\gamma$-bounded. 
If, in addition, $E$ 
has property $(\al)$, then the family
$$ \{f(-A): \ f\in H^\infty(\Sigma_{\eta'}), \ \n f\n_\infty\le 1\}$$
is $\ga$-bounded. 
\end{proposition}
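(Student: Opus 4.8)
The plan is to prove the two assertions separately. The $\ga$-sectoriality is the classical fact that a bounded $H^\infty$-calculus of angle $<\pihalbe$ forces sectoriality to upgrade to $\ga$-sectoriality of the same angle; I would cite this \cite{KunstmannWeis:Levico} rather than reprove it, recalling only that its proof rests on the square function estimates attached to the calculus — for a fixed $0\ne\psi\in H_0^\infty(\Sigma_{\nu})$, $\eta<\nu<\pihalbe$, one has $\|x\|_E\asymp\|t\mapsto\psi(t(-A))x\|_{\ga(L^2(\R_+,\frac{dt}{t}),E)}$ \cite{KaltonWeis:square-function-est} — since the resolvent function $\la\mapsto z/(z-\la)$ is bounded on $\Sigma_\nu$ but not in $H_0^\infty$. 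For later use I record two consequences of the first assertion: for every $\nu\in(\eta,\pihalbe)$ the family $\{z(z+A)^{-1}:z\notin\overline{\Sigma_{\nu}}\}$ is $\ga$-bounded, and hence (writing $\psi(t(-A))$ as a Cauchy integral of resolvents against a $t$-independent total mass) so is $\{\psi(t(-A)):t>0\}$ for any $\psi\in H_0^\infty$; moreover, by conditioning on one Gaussian family, both families, acting coordinatewise, are $\ga$-bounded on $\ga(\ell^2_N,E)$ uniformly in $N$.

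For the second assertion I would first reduce to symbols $f\in H_0^\infty(\Sigma_{\nu})$ with $\|f\|_\infty\le1$ (and $\eta<\nu<\eta'$): a general $f\in H^\infty(\Sigma_{\eta'})$ is the bounded-pointwise limit of products $f\varphi_k$ with $\varphi_k\in H_0^\infty$, $\sup_k\|\varphi_k\|_\infty<\infty$, and the $\ga$-bound of a set is not increased by passing to its absolutely convex closure in the strong operator topology. Given $f_1,\dots,f_N\in H_0^\infty(\Sigma_{\nu})$ with $\|f_j\|_\infty\le1$ and $x_1,\dots,x_N\in E$, let $\xi\in\ga(\ell^2_N,E)$ be the operator $e_j\mapsto f_j(-A)x_j$, so that $\E\,\|\sum_j\ga_j f_j(-A)x_j\|^2=\|\xi\|_{\ga(\ell^2_N,E)}^2$. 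Applying the square function estimate to $-A$ acting coordinatewise on $\ga(\ell^2_N,E)$ (its $H^\infty$-calculus being the original one in each coordinate, hence bounded with the same constant) and then the property-$(\alpha)$ isomorphism $\ga(L^2(\R_+,\frac{dt}{t})\widehat\otimes\ell^2_N,E)\simeq\ga(L^2(\R_+,\frac{dt}{t}),\ga(\ell^2_N,E))$, one bounds $\|\xi\|_{\ga(\ell^2_N,E)}$ by a constant times $\|(t,j)\mapsto(\psi(t\cdot)f_j)(-A)x_j\|_{\ga(L^2(\R_+,\frac{dt}{t})\widehat\otimes\ell^2_N,E)}$.

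The remaining task is to discharge the symbols $f_j$. Here I would expand $(\psi(t\cdot)f_j)(-A)$ — or, after splitting off a smoothing factor, $f_j(-A)$ itself — by the Cauchy integral along a contour $\partial\Sigma_{\nu'}$, $\eta<\nu'<\nu$, which exposes the $f_j$ as scalar weights $f_j(z)$ with $|f_j(z)|\le1$ multiplying resolvents $z(z+A)^{-1}$. One then organises the resulting (double) integral so that the decay of $\psi$, respectively of $f_j\in H_0^\infty$, is spent — via the Kalton--Weis extension theorem — on convolution-type operators (in the logarithmic variable, convolution by an $L^1$-kernel) on the relevant $\ga$-spaces; the resolvents $z(z+A)^{-1}$ are absorbed by the Kalton--Weis multiplier theorem using the $\ga$-sectoriality from the first part; and the diagonal scalar family $\operatorname{diag}(f_j(z))_j$ is absorbed as a $\ga$-bounded family on $\ga(\ell^2_N,E)$, this being exactly where property $(\alpha)$, in the form of Proposition \ref{prop:Pisier}, enters, through the uniform bound $|f_j(z)|\le1$. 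A concluding reverse square function estimate collapses the auxiliary integration and returns $\|(x_j)_j\|_{\ga(\ell^2_N,E)}=(\E\,\|\sum_j\ga_j x_j\|^2)^{1/2}$. Conceptually this is the Kalton--Weis theorem that on a space with property $(\alpha)$ a bounded $H^\infty$-calculus is automatically $R$-bounded, of which the second assertion is the diagonal case; one could instead cite that.

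The main obstacle is twofold. First, the $\ga$-sectoriality of the first assertion is itself a substantial classical result which genuinely uses the square function estimates, so I would cite it rather than reprove it. Second, in the Cauchy-integral bookkeeping of the second assertion the distribution of the available decay is delicate: the decay furnished by $\psi\in H_0^\infty$ is limited and must be spent exactly on the smoothing operators and never on the resolvents $z(z+A)^{-1}$, which do not decay, while the individual symbols $f_j$ have non-uniform decay and can therefore be handled only through the uniform bound $|f_j(z)|\le1$ together with property $(\alpha)$ — so that convergence of the relevant integrals is recovered only at the very end, through the reverse square function estimate, rather than by any direct integration. Aligning the sectors $\eta<\nu'<\nu<\eta'$, the Fubini--Minkowski exchanges inside the $\ga$-spaces, and the repeated appeals to Proposition \ref{prop:Pisier} and to the first assertion is the fiddly part.
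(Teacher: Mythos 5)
The paper gives no proof of this proposition at all---it simply cites \cite{KunstmannWeis:Levico}---and your sketch is a correct outline of precisely the argument found there: the first assertion is the classical upgrade of sectoriality to $\gamma$-sectoriality via square function estimates, and the second is the Kalton--Weis theorem that on a space with property $(\alpha)$ a bounded $H^\infty$-calculus is automatically $R$-bounded (equivalently $\ga$-bounded, since property $(\alpha)$ forces finite cotype), of which the stated family is the diagonal instance. So you are taking essentially the same route as the paper, merely expanding its citation into a proof sketch; nothing in your outline is wrong, and citing the Kalton--Weis result directly, as you suggest at the end, is exactly what the authors do.
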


\subsection{Rademacher interpolation}
If $-A$ is a sectorial operator
on $E$, then for $\theta\in\R$ we may define
the Banach space $\dot E_{\theta}$ as the completion of $\D((-A)^\theta)$
with
respect to the norm \[     \n x\n_{\dot E_{\theta}} := \n (-A)^{\theta} x\n.    \]
Note that $(-A)^{\theta}$ extends uniquely to
an isomorphism from $\dot E_{\theta}$ onto $E$; with some abuse of notation this
extension will also be denoted by  $(-A)^{\theta}$.
In particular, $\dot E_{-1}$ is the completion of the range 
$\Ran(A)$ with respect
to the norm
\[    \n Ax\n_{\dot E_{-1}} := \n x\n.    \]

Note that
\begin{equation}\label{eq:dot}     E+\dot E_{-1}=i_{-1} E_{-1}    
\end{equation}
 with equivalent norms.
For the reader's convenience we include the short proof.
We trivially have $E\embed E_{-1}$,
and the embedding $\dot E_{-1}\embed E_{-1}$ is a consequence of the fact that
for all $x\in \D((-A)^{-1}) = 
\Ran(A)$, say $x = Ay$, we have
\[     \n x\n_{E_{-1}} \le C\n (I-A)^{-1}x\n = C\n (I-A)^{-1}A\n \n y\n
= C\n (I-A)^{-1}A\n \n x\n_{\dot E_{-1}}.    \]
It follows that $ E+\dot E_{-1}\embed E_{-1}$
with continuous inclusion. Since $I-A$ is surjective from $E$ onto $E_{-1}$,
every $x\in E_{-1}$ is of the form $x = y-Ay$ for some $y\in E$, which implies
that $x \in E+\dot E_{-1}$. It follows that  the inclusion
$ E+\dot E_{-1}\embed E_{-1}$ is surjective, and the claim now follows from the
open mapping theorem.

Let $(X_0,X_1)$ be an interpolation couple of Banach spaces.
Let $(r_n)_{n\in\Z}$ be a Rademacher sequence on a probability space
$(\Omega,\P)$.
For $0<\theta<1$ the {\em Rademacher interpolation space}
$\lb X_0,X_1\rb_\theta$
consists of all $x\in X_0+X_1$ which can be represented as a sum
\beq\label{eq:reprx}
x = \sum_{n\in\Z} x_n, \quad x_n\in X_0\cap X_1,
\eeq
convergent in $X_0+X_1$, such that
\[    
\bal
\mathscr{C}_0((x_n)_{n\in\Z}) & := \sup_{N\ge 0} \E\Big(\Big\n\sum_{n=-N}^N
r_n 2^{-{n\theta}} x_n\Big\n_{X_0}^2\Big)^\einhalb < \infty,\\
\mathscr{C}_1((x_n)_{n\in\Z}) & := \sup_{N\ge 0} \E \Big(\Big\n\sum_{n=-N}^N
r_n 2^{n(1-\theta)} x_n\Big\n_{X_1}^2\Big)^\einhalb < \infty.
\eal
    \]
The norm of an element  $x\in \lb X_0,X_1\rb_\theta$ is defined as
\[     \n x\n_{\lb X_0,X_1\rb_\theta} :=
\inf \  \Big(\max\big\{\mathscr{C}_0((x_n)_{n\in\Z}), \,
\mathscr{C}_1((x_n)_{n\in\Z})\big\}\Big),    \]
where the infimum extends  over all representations \eqref{eq:reprx}.
This interpolation method was introduced by Kalton, Kunstmann and Weis, who proved that
if $-A$ admits a bounded $H^\infty$--calculus (of any angle $<\pi$), 
then
for all $0<\theta<1$ and real numbers $\al<\beta$ one has
\[     \lb \dot E_\al, \dot E_{\beta}\rb_\theta = \dot E_{(1-\theta)\al + \theta \beta}    \]
with equivalent norms
\cite[Theorem 7.4]{KaltonKunstmannWeis}.
Applying this 
to the induced operator $I \otimes A$ on $L^2(\Omega;E)$,
defined by $(I\otimes A)(f\otimes x):= f\otimes Ax$ for $f\in L^2(\Omega)$ and vectors $x\in \mathscr{D}(A)$,
we obtain the following vector-valued extension of this result:

\begin{proposition}\label{prop:RadInterp}
If $-A \in S(E)$ admits a bounded $H^\infty$--calculus, then
\[      \lb L^2(\Omega; \dot E_\al), L^2(\Omega;\dot E_{\beta})\rb_\theta = 
L^2(\Omega;\dot E_{(1-\theta)\al + \theta \beta}).    \]
\end{proposition}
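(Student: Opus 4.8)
The plan is to deduce Proposition~\ref{prop:RadInterp} from the scalar result of Kalton, Kunstmann and Weis \cite[Theorem 7.4]{KaltonKunstmannWeis} by applying it to the operator $I\otimes A$ on the Banach space $L^2(\Omega;E)$, exactly as indicated in the remark preceding the statement. The work splits into two parts: first, checking that $I\otimes A$ (more precisely, the closure of the algebraic tensor extension) is again in $S(\cdot)$ on $L^2(\Omega;E)$ and inherits a bounded $H^\infty$-calculus; second, identifying the homogeneous fractional domain spaces $\dot{(L^2(\Omega;E))}_\theta$ of $I\otimes A$ with $L^2(\Omega;\dot E_\theta)$, so that the scalar interpolation identity $\lb \dot E_\al,\dot E_\beta\rb_\theta = \dot E_{(1-\theta)\al+\theta\beta}$ translates into the claimed vector-valued statement.

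For the first part, I would recall the standard fact that resolvents (and, more generally, the functional calculus) commute with the tensor extension: for $\la\notin\overline{\Sigma_\theta}$ one has $\la(\la + I\otimes A)^{-1} = I\otimes \la(\la+A)^{-1}$ on the algebraic tensor product, and since $\|I\otimes T\|_{\calL(L^2(\Omega;E))} = \|T\|_{\calL(E)}$ for any $T\in\calL(E)$, sectoriality of $-A$ on $E$ of a given angle transfers verbatim to $-(I\otimes A)$ on $L^2(\Omega;E)$, with the same angle and the same constants. Injectivity and dense range are likewise inherited (they can be read off from strong convergence of $t(t+A)^{-1}$ and $A(t+A)^{-1} = I - t(t+A)^{-1}$ as $t\to 0$ and $t\to\infty$, which tensorises by dominated convergence). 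The same argument applied to $f(-A)$ for $f\in H_0^\infty(\Sigma_\eta)$ gives $f(-(I\otimes A)) = I\otimes f(-A)$ and hence $\|f(-(I\otimes A))\| = \|f(-A)\|\le C\|f\|_\infty$, so $-(I\otimes A)\in S(L^2(\Omega;E))$ admits a bounded $H^\infty$-calculus of the same angle $<\pihalbe$.

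For the second part, fractional powers again tensorise: $((I\otimes A))^\theta = I\otimes(-A)^\theta$ on $\D((-A)^\theta)\otimes L^2(\Omega)$ (one checks this from the Balakrishnan integral representation of $(-A)^\theta$, which commutes with $I\otimes(\cdot)$), and this extends to an isomorphism. Consequently the completion of $\D((I\otimes A)^\theta)\subseteq L^2(\Omega;E)$ under the norm $\|(I\otimes A)^\theta\cdot\|$ is exactly $L^2(\Omega;\dot E_\theta)$: on the dense subspace $\D((-A)^\theta)\otimes L^2(\Omega)$ the norm $\|(I\otimes A)^\theta(f\otimes x)\|_{L^2(\Omega;E)}$ equals $\|f\|_{L^2(\Omega)}\|(-A)^\theta x\|_E = \|f\|_{L^2(\Omega)}\|x\|_{\dot E_\theta}$, and by density this identification is isometric (up to the equivalence of norms already present in the scalar theory). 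With these two identifications in hand, applying \cite[Theorem 7.4]{KaltonKunstmannWeis} to $I\otimes A$ yields
\[
  \lb L^2(\Omega;\dot E_\al), L^2(\Omega;\dot E_\beta)\rb_\theta
  = \lb \dot{(L^2(\Omega;E))}_\al, \dot{(L^2(\Omega;E))}_\beta\rb_\theta
  = \dot{(L^2(\Omega;E))}_{(1-\theta)\al+\theta\beta}
  = L^2(\Omega;\dot E_{(1-\theta)\al+\theta\beta}),
\]
with equivalent norms, which is the assertion.

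The main obstacle, such as it is, is bookkeeping rather than any genuine difficulty: one must be careful that the Rademacher interpolation functor $\lb\cdot,\cdot\rb_\theta$ depends only on the interpolation couple (its definition uses only the ambient sum space $X_0+X_1$ and the norms of $X_0$, $X_1$), so that once the three pairs of spaces above are identified as interpolation couples the functor may be applied mechanically. A minor point worth checking explicitly is that the sum space behaves well under tensoring, i.e. $L^2(\Omega;\dot E_\al) + L^2(\Omega;\dot E_\beta) = L^2(\Omega; \dot E_\al + \dot E_\beta)$ (with equivalent norms), which follows from \eqref{eq:dot}-type reasoning together with the fact that $L^2(\Omega;\cdot)$ commutes with finite sums of Banach spaces; and that the Rademacher sequence appearing in the definition of $\lb\cdot,\cdot\rb_\theta$ can be taken on the \emph{same} probability space $\Omega$ without loss of generality, or else on an auxiliary independent one, since the value of the interpolation norm is insensitive to this choice. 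None of these points requires more than a routine verification, so no serious obstruction is anticipated.
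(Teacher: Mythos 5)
Your proposal is correct and takes essentially the same approach as the paper, which offers no proof beyond the remark that \cite[Theorem 7.4]{KaltonKunstmannWeis} is to be applied to the induced operator $I\otimes A$ on $L^2(\Omega;E)$; your verification that sectoriality, the $H^\infty$-calculus, fractional powers and hence the homogeneous spaces all tensorise with $L^2(\Omega)$, so that $\dot{(L^2(\Omega;E))}_{\theta}$ identifies with $L^2(\Omega;\dot E_{\theta})$, supplies exactly the details the paper leaves implicit (modulo the harmless sign slip of writing $((I\otimes A))^\theta$ for $(-(I\otimes A))^\theta$).
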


\section{Proof of Theorem~\ref{thm:equivalence}}

We begin with a useful observation.

\begin{lemma}\label{lem:resolvB}
 Let $A$ generate a strongly continuous semigroup on $E$ and suppose that 
the equivalent conditions of Proposition~\ref{prop:invariant} be satisfied.
Then  for all $\lambda\in \varrho(A)$ there exists an operator $\widehat S(\la)B\in \ga(H,E)$
such that  \[     i_{-1}\circ \widehat S(\la)B =  R(\lambda,A_{-1})\circ B.    \]
\end{lemma}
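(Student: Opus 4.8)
The plan is to exploit the characterisation in Proposition~\ref{prop:invariant}: by hypothesis there exists $R_\infty\in\ga(L^2(\R_+;H),E)$ with $R_\infty^*(i_{-1}^*x_{-1}^*) = B^*S_{-1}^*(\cdot)x_{-1}^*$ in $L^2(\R_+;H)$ for all $x_{-1}^*\in E_{-1}^*$. Fix $\la\in\varrho(A)$. The natural candidate for $\widehat S(\la)B$ is the operator obtained by composing $R_\infty$ with a suitable bounded map $L^2(\R_+;H)\to H$, or dually, by composing $R_\infty^*$ with the map $H^*\to L^2(\R_+;H)$ adjoint to it. Concretely, I would define $\Lambda_\la: H\to L^2(\R_+;H)$ by $(\Lambda_\la h)(t) := e^{-\la t}h$ for $\Re\la>0$ (and more generally use the Laplace transform identity $R(\la,A_{-1}) = \int_0^\infty e^{-\la t}S_{-1}(t)\,dt$ valid for $\la$ in a right half-plane, then extend by the resolvent identity to all of $\varrho(A)$). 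Then $\Lambda_\la$ is bounded, and I would set $\widehat S(\la)B := R_\infty\circ\Lambda_\la \in \ga(H,E)$, using that $\ga(\cdot,E)$ is an operator ideal so that the composition of a bounded operator with a $\ga$-radonifying one is again $\ga$-radonifying.

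The next step is to verify the identity $i_{-1}\circ\widehat S(\la)B = R(\la,A_{-1})\circ B$ on $H$. I would check this by testing against functionals $x_{-1}^*\in E_{-1}^*$. For $h\in H$ and $\Re\la$ large,
\[
\lb i_{-1}\widehat S(\la)Bh, x_{-1}^*\rb
 = \lb R_\infty\Lambda_\la h, i_{-1}^* x_{-1}^*\rb
 = [\Lambda_\la h, R_\infty^*(i_{-1}^* x_{-1}^*)]_{L^2(\R_+;H)}
 = \int_0^\infty e^{-\la t}[h, B^*S_{-1}^*(t)x_{-1}^*]_H\,dt,
\]
and the last integral equals $\int_0^\infty e^{-\la t}\lb S_{-1}(t)Bh, x_{-1}^*\rb\,dt = \lb R(\la,A_{-1})Bh, x_{-1}^*\rb$ by the Laplace transform representation of the resolvent of $A_{-1}$ applied to the vector $Bh\in E_{-1}$. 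Since $E_{-1}^*$ separates points of $E_{-1}$, this gives the claimed identity for $\Re\la$ large; I would then propagate it to all $\la\in\varrho(A)$ either by analytic continuation (both sides are analytic $E_{-1}$-valued functions of $\la$ on $\varrho(A)$, once one knows $\la\mapsto\widehat S(\la)B$ is analytic, e.g.\ because $\la\mapsto\Lambda_\la$ is analytic into $\calL(H,L^2(\R_+;H))$ on the half-plane and extends) or, more cleanly, by first establishing it on the half-plane and then using the resolvent identity together with the fact that for $\mu,\la\in\varrho(A)$ one has $R(\mu,A_{-1})B - R(\la,A_{-1})B = (\la-\mu)R(\mu,A)R(\la,A_{-1})B$, with $R(\mu,A)$ acting boundedly on $E$ and preserving membership in $\ga(H,E)$ — this lets one define $\widehat S(\mu)B := \widehat S(\la)B + (\la-\mu)R(\mu,A)\widehat S(\la)B$ consistently and check it agrees with $R(\mu,A_{-1})B$ after applying $i_{-1}$.

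I expect the main obstacle to be bookkeeping around the extrapolation space: one must keep straight that $B$ maps into $E_{-1}$, that $i_{-1}$ is only a (dense) embedding, that $R(\la,A_{-1})$ restricted to $i_{-1}E$ agrees with $i_{-1}R(\la,A)$, and that the Laplace transform identity $R(\la,A_{-1})x = \int_0^\infty e^{-\la t}S_{-1}(t)x\,dt$ does hold in $E_{-1}$ for $x\in E_{-1}$ (this is standard since $A_{-1}$ generates the $C_0$-semigroup $S_{-1}$, but the integral must be read as an $E_{-1}$-valued Bochner integral). A minor subtlety is that $\Lambda_\la h(t)=e^{-\la t}h$ is genuinely in $L^2(\R_+;H)$ only for $\Re\la>0$; for general $\la\in\varrho(A)$ with $\Re\la\le0$ one cannot use this formula directly, which is exactly why the resolvent-identity extension argument (rather than a single closed formula) is the robust route. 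No deeper functional-analytic input is needed beyond the ideal property of $\ga(H,E)$ and Proposition~\ref{prop:invariant}.
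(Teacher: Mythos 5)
Your proposal is correct and follows essentially the same route as the paper: define $\widehat S(\la)Bh := R_\infty(e^{-\la\cdot}\otimes h)$ for $\la$ in a suitable right half-plane, verify the identity by pairing with $x_{-1}^*\in E_{-1}^*$ via $R_\infty^*(i_{-1}^*x_{-1}^*)=B^*S_{-1}^*(\cdot)x_{-1}^*$ and the Laplace transform of $S_{-1}$, and then propagate to all of $\varrho(A)$ by the resolvent identity. The paper dispatches the extension step in one sentence, whereas you spell it out; otherwise the arguments coincide.
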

\begin{proof}
It suffices to prove this for one $\la\in\varrho(A)$; then, 
by the resolvent identity, this holds for all $\la\in\varrho(A)$.


Fix an arbitrary $\la>\omega_0(S_{-1})$, the exponential growth bound of
$(S_{-1}(t))_{t\ge 0}$. 
By assumption there exists an operator
$R_\infty \in \ga(L^2(\R_+;H),E)$ such that for all $x_{-1}^* \in
E_{-1}^*$ we have $R_\infty\s (i_{-1}\s x_{-1}\s) = B\s
S_{-1}\s(\cdot)x_{-1}\s$ in $ L^2(\R_+;H)$.  The operator $\widehat
S(\la)B: H\to E$ given by
\[ \widehat S(\la)B h := R_\infty(e^{-\la\cdot} \otimes h)\]
is $\ga$-radonifying and satisfies, for all $x_{-1}^*\in E_{-1}^*$,
\[ \lb i_{-1} \widehat S(\la)B h, x_{-1}^*\rb  = \int_0^\infty e^{-\la t} \lb S_{-1}(t)Bh, x_{-1}^*\rb \, dt 
= \lb R(\la, A_{-1})Bh,x_{-1}^*\rb.\]
Hence by the Hahn-Banach theorem, $\widehat S(\la)B$ satisfies the desired identity.
\end{proof}

If the semigroup generated by $A$ is analytic, then $R(\la,A_{-1})$ maps $E_{-1}$ into $\D(A_{-1}) = E$
and therefore we may interpret $R(\lambda,A_{-1}) B$ as an operator from $H$ to $E$.
By the injectivity of $i_{-1}$ this operator equals $\widehat S(\la)B$. 
From now on we simply write \[     R(\la,A)B   := \widehat S(\la)B \] to denote this operator.

\begin{proposition}\label{prop}
Suppose that $-A\in S(E)$ has a bounded $H^\infty$-calculus of angle $\om <\pihalbe$ on a Banach space 
$E$ with property $(\alpha)$.
Then for all  $B\in \calL(H, E_{-1})$ and $\theta \in (\omega, \pi)$
the following assertions are equivalent:
\begin{enumerate}
 \item \label{item:prop:AeinhalbB} $B\in \gamma(H,\dot E_{-\einhalb})$;
 \item \label{item:prop:allpsi} $t\mapsto \phi(-t A)B$ belongs to 
$\gamma(L^2(\R_+, \frac{dt}{t};H),\dot E_{-\einhalb})$
for all $\phi\in H_0^\infty(\Sigma_\theta)$;
 \item \label{item:prop:somepsi} $t\mapsto\psi(-t A)B$ belongs to 
$\gamma(L^2(\R_+, \frac{dt}{t};H),\dot E_{-\einhalb})$,
with $\psi(z) = z^{\einhalb}/(1+z)^{\dreihalb}$.
\end{enumerate}
In this situation, for any two $\phi,\tilde\phi\in H_0^\infty(\Sigma_\theta)$ satisfying
\[     \int_0^\infty \phi(t)\frac{dt}{t} =  \int_0^\infty \tilde\phi(t)\frac{dt}{t} = 1    \]
we have an equivalence of norms
\beq\label{eqNorms}
\n t\mapsto \phi(-tA)B\n_{\gamma(L^2(\R_+, \frac{dt}{t};H),\dot E_{-\einhalb})}\eqsim
\n t\mapsto \tilde\phi(-tA)B\n_{\gamma(L^2(\R_+, \frac{dt}{t};H),\dot E_{-\einhalb})}
\eeq
with implied constants independent of $\phi$ and $\tilde\phi$.
\end{proposition}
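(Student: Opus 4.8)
The plan is to prove the chain of equivalences \ref{item:prop:AeinhalbB} $\Rightarrow$ \ref{item:prop:allpsi} $\Rightarrow$ \ref{item:prop:somepsi} $\Rightarrow$ \ref{item:prop:AeinhalbB}, together with the norm equivalence \eqref{eqNorms}, by combining the Kalton--Weis multiplier theorem with McIntosh-type square function estimates. Throughout, I would work in the space $\dot E_{-\einhalb}$, on which $-A$ still admits a bounded $H^\infty$-calculus of the same angle $\om$ (since $(-A)^{-\einhalb}$ is an isomorphism $\dot E_{-\einhalb}\to E$ intertwining the two calculi), and I would use the property-$(\alpha)$ isomorphism $\ga(L^2(\R_+,\tfrac{dt}{t};H),\dot E_{-\einhalb})\simeq \ga(L^2(\R_+,\tfrac{dt}{t}),\ga(H,\dot E_{-\einhalb}))$ recorded in Section~\ref{sec:prelim} to reduce everything to scalar-valued square functions with values in the Banach space $G:=\ga(H,\dot E_{-\einhalb})$.

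For \ref{item:prop:AeinhalbB} $\Rightarrow$ \ref{item:prop:allpsi}: given $\phi\in H_0^\infty(\Sigma_\theta)$, the family $\{\phi(-tA): t>0\}$ is $\ga$-bounded on $\dot E_{-\einhalb}$ by Proposition~\ref{prop:gamma-bounded} (here property $(\alpha)$ is used), since each $\phi(-t\cdot)$ has uniformly bounded $H^\infty$-norm on $\Sigma_{\theta'}$ for $\om<\theta'<\theta$. Hence the Kalton--Weis multiplier theorem, applied to the strongly measurable, $\ga$-bounded multiplier $M(t):=\phi(-tA)$ acting on $\dot E_{-\einhalb}$ (which does not contain $c_0$, being a $\ga$-space, hence $K$-convex under property $(\alpha)$ — or more simply because $-A$ having a bounded $H^\infty$-calculus forces finite cotype), shows that $B\in\ga(H,\dot E_{-\einhalb})$, viewed as the constant function $t\mapsto B\in\ga(L^2(\R_+,\tfrac{dt}{t};H),\dot E_{-\einhalb})$ in the trivial way, maps to $t\mapsto \phi(-tA)B$ in the same space, with norm controlled by the $\ga$-bound times $\n B\n_{\ga(H,\dot E_{-\einhalb})}$. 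A small point to check is that the "constant function" $B$ genuinely defines an element of $\ga(L^2(\R_+,\tfrac{dt}{t};H),\dot E_{-\einhalb})$; this is where one uses that $L^2(\R_+,\tfrac{dt}{t})$ is infinite-dimensional, and one should instead phrase the argument with a fixed normalising $\rho\in H_0^\infty$ with $\int_0^\infty\rho(t)\tfrac{dt}{t}=1$, factoring $\phi(-tA)B = \phi(-tA)(-A)^{\einhalb}\cdot(-A)^{-\einhalb}B$ and applying the multiplier theorem to the elementary tensor $(\rho(t)\otimes h)\otimes(-A)^{-\einhalb}Bh\mapsto$ its $M$-image; the reproducing formula $\int_0^\infty \phi(-tA)\psi(-tA)\,\tfrac{dt}{t}$ being bounded and invertible handles the bookkeeping.

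The implication \ref{item:prop:allpsi} $\Rightarrow$ \ref{item:prop:somepsi} is immediate, since $\psi(z)=z^{\einhalb}/(1+z)^{\dreihalb}$ lies in $H_0^\infty(\Sigma_\theta)$ for every $\theta\in(\om,\pi)$ and $\int_0^\infty\psi(t)\,\tfrac{dt}{t}<\infty$; after normalising $\psi$ by this constant it is a legitimate choice in \ref{item:prop:allpsi}. The nontrivial direction is \ref{item:prop:somepsi} $\Rightarrow$ \ref{item:prop:AeinhalbB}, which is where the bounded $H^\infty$-calculus is used essentially rather than just for $\ga$-sectoriality: from the Calder\'on reproducing identity one writes, for a suitable $\widetilde\psi\in H_0^\infty(\Sigma_\theta)$ with $\int_0^\infty\widetilde\psi(t)\psi(t)\,\tfrac{dt}{t}=1$,
\[
   (-A)^{-\einhalb}Bh \;=\; \int_0^\infty \widetilde\psi(-tA)\,(-A)^{\einhalb}\psi(-tA)(-A)^{-\einhalb}Bh\;\frac{dt}{t},
\]
and recognises the right-hand side, after moving the isomorphism $(-A)^{-\einhalb}:\dot E_{-\einhalb}\to E$ to the outside, as the action of the $\ga$-bounded multiplier family $\{(-A)^{\einhalb}\widetilde\psi(-tA):t>0\}$ on $\dot E_{-\einhalb}$ against the element $t\mapsto\psi(-tA)B\in\ga(L^2(\R_+,\tfrac{dt}{t};H),\dot E_{-\einhalb})$, followed by the bounded "integration" map $\ga(L^2(\R_+,\tfrac{dt}{t};H),\dot E_{-\einhalb})\to\ga(H,\dot E_{-\einhalb})$ (which is exactly the Kalton--Weis extension theorem applied to $g\mapsto\int_0^\infty g(t)\,\tfrac{dt}{t}$, bounded on the relevant $L^2$-spaces). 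Hence $(-A)^{-\einhalb}B\in\ga(H,E)$, equivalently $B\in\ga(H,\dot E_{-\einhalb})$. The norm equivalence \eqref{eqNorms} then falls out of the same circle of estimates: bounding $\n t\mapsto\phi(-tA)B\n$ by a constant times $\n B\n_{\ga(H,\dot E_{-\einhalb})}$ via \ref{item:prop:AeinhalbB} $\Rightarrow$ \ref{item:prop:allpsi}, and $\n B\n_{\ga(H,\dot E_{-\einhalb})}$ by a constant times $\n t\mapsto\widetilde\phi(-tA)B\n$ via the reproducing-formula argument of \ref{item:prop:somepsi} $\Rightarrow$ \ref{item:prop:AeinhalbB}, with all constants depending only on the $H^\infty$-calculus bound, the angle, and the normalisation constants of $\phi,\widetilde\phi$ — not on $\phi,\widetilde\phi$ themselves, because the $\ga$-bound of $\{f(-tA):\n f\n_{H^\infty(\Sigma_{\theta'})}\le 1,\ t>0\}$ is a single uniform constant.

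The main obstacle I anticipate is bookkeeping the passage between $E$ and $\dot E_{-\einhalb}$ cleanly: one must verify that $-A$ has a bounded $H^\infty$-calculus (of the same angle) on $\dot E_{-\einhalb}$ and that $\dot E_{-\einhalb}$ inherits property $(\alpha)$ and fails to contain $c_0$, so that Propositions \ref{prop:gamma-bounded}, \ref{prop:RadInterp} and the multiplier/extension theorems all apply there; and one must be careful that $B:H\to E_{-1}$ really does land in $\dot E_{-\einhalb}$ as an operator (rather than merely formally) precisely under hypothesis \ref{item:prop:somepsi}, which is the content of combining Lemma~\ref{lem:resolvB} with the identification $\dot E_{-\einhalb}\hookrightarrow E_{-1}$ from \eqref{eq:dot}. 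Once these identifications are in place, the argument is a by-now-standard application of the Kalton--Weis machinery; the cleanest route is probably to first establish everything for the single normalised function $\psi$ and then bootstrap to arbitrary $\phi$ by the Calder\'on reproducing formula, which simultaneously yields \eqref{eqNorms}.
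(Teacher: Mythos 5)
Your overall architecture (the cycle \ref{item:prop:AeinhalbB} $\Rightarrow$ \ref{item:prop:allpsi} $\Rightarrow$ \ref{item:prop:somepsi} $\Rightarrow$ \ref{item:prop:AeinhalbB}) matches the paper's, and your treatment of \ref{item:prop:AeinhalbB} $\Rightarrow$ \ref{item:prop:allpsi} --- scalar square function estimates for the calculus upgraded to $\ga(H,\cdot)$-valued ones via property $(\al)$ --- is essentially what the paper obtains by citing \cite[Theorem 7.2]{KaltonWeis:square-function-est} and \cite[Theorem 5.3]{NW-alpha}. The genuine gap is in the key implication \ref{item:prop:somepsi} $\Rightarrow$ \ref{item:prop:AeinhalbB}. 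You propose to close the loop with the Calder\'on reproducing formula followed by ``the bounded integration map $\ga(L^2(\R_+,\frac{dt}{t};H),\dot E_{-\einhalb})\to\ga(H,\dot E_{-\einhalb})$, which is exactly the Kalton--Weis extension theorem applied to $g\mapsto\int_0^\infty g(t)\,\frac{dt}{t}$, bounded on the relevant $L^2$-spaces.'' That map is \emph{not} bounded on the relevant $L^2$-spaces: the functional $g\mapsto\int_0^\infty g(t)\,\frac{dt}{t}$ is the pairing against the constant function $\one$, which does not belong to $L^2(\R_+,\frac{dt}{t})$, so the integral need not even converge for a general $g\in L^2(\R_+,\frac{dt}{t};H)$ and the Kalton--Weis extension theorem cannot be invoked. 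The convergence of $\int_0^\infty\widetilde\psi(-tA)\psi(-tA)x\,\frac{dt}{t}$ comes from the decay of the product $\widetilde\psi\psi$ at $0$ and $\infty$, not from any $L^2$-boundedness of integration; to convert the reproducing formula into a norm estimate one needs a genuine duality argument involving square function estimates for the adjoint $A^*$ (or $A^\sharp$), or some substitute for it. That is precisely the crux of the proposition, and your sketch does not supply it. (A second, smaller issue: the family $\{(-A)^{\einhalb}\widetilde\psi(-tA):t>0\}$ as written is not even uniformly bounded, since $(-A)^{\einhalb}\widetilde\psi(-tA)=t^{-\einhalb}\chi(-tA)$ with $\chi(z)=z^{\einhalb}\widetilde\psi(z)$; one must keep $(-A)^{\einhalb}$ outside as the isomorphism $\dot E_{-\einhalb}\to E$.)

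The paper's route avoids this entirely. It splits the reproducing integral $\int_0^\infty(-tA)^{\dreihalb}(1-tA)^{-3}Bh_j\,\frac{dt}{t}$ into dyadic blocks $x_n$, estimates the two Rademacher interpolation functionals $\mathscr{C}_0((x_n))$ and $\mathscr{C}_1((x_n))$ relative to $X_0=L^2(\Om;\dot E_{-1})$ and $X_1=L^2(\Om;E)$ by $\n t\mapsto\psi(-tA)B\n_{\ga(L^2(\R_+,\frac{dt}{t};H),\dot E_{-\einhalb})}$ (this part uses only $\ga$-sectoriality, property $(\al)$ and the Kalton--Weis multiplier theorem), and then invokes the Kalton--Kunstmann--Weis identity $\lb\dot E_{-1},E\rb_{\einhalb}=\dot E_{-\einhalb}$ in the vector-valued form of Proposition~\ref{prop:RadInterp}. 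That interpolation identity is the one place where the boundedness of the $H^\infty$-calculus is genuinely used, and it is exactly the substitute for the dual square function estimate your argument is missing (see the Remark following Proposition~\ref{prop}, which makes this point explicitly). Finally, the norm equivalence \eqref{eqNorms} does not simply ``fall out'': the stated normalisation is $\int_0^\infty\phi(t)\frac{dt}{t}=1$, whereas a reproducing-formula argument is normalised by $\int_0^\infty\widetilde\psi(t)\psi(t)\frac{dt}{t}=1$, so the constants you would obtain depend on the auxiliary function. The paper instead reduces \eqref{eqNorms} to the scalar statement \cite[Proposition 7.7]{KaltonWeis:square-function-est} by integrating against the Gaussian measure associated with $B\in\ga(H,\dot E_{-\einhalb})$.
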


\begin{proof}
We shall prove the implications \ref{item:prop:AeinhalbB} $\Rightarrow$ 
\ref{item:prop:allpsi} $\Rightarrow$ \ref{item:prop:somepsi} 
$\Rightarrow$ \ref{item:prop:AeinhalbB}.

\medskip
\ref{item:prop:AeinhalbB} $\Rightarrow$ \ref{item:prop:allpsi}:
This follows from \cite[Theorem 7.2 and Remark 7.3(2)]{KaltonWeis:square-function-est} 
and \cite[Theorem 5.3]{NW-alpha}. 

\ref{item:prop:allpsi} $\Rightarrow$ \ref{item:prop:somepsi}: This is
trivial, as $\psi$ belongs to $H_0^\infty(\Sigma_\theta)$ for all $\theta<\pi$;

\ref{item:prop:somepsi} $\Rightarrow$ \ref{item:prop:AeinhalbB}:
 Let $(r_j)_{j\ge 1}$ be a Rademacher sequence on a probability space
$(\Omega,\P)$ and let $(h_j)_{j=1}^k$ be an orthonormal system in $H$.
Using that $\psi \in H_0^\infty(\Sigma_\theta)$, from \cite[Theorem~5.2.6]{Haase:Buch} we obtain
\[    
\begin{aligned} \sum_{j=1}^k r_j Bh_j & \eqsim \sum_{j=1}^k r_j 
\int_0^\infty (-tA)^\dreihalb (1-tA)^{-3} Bh_j\, \frac{dt}{t}
 \\ & =  \sum_{j=1}^k\sum_{n\in \Z} r_j \int_{2^n}^{2^{n+1}} (-tA)^\dreihalb (1-tA)^{-3} Bh_j\, \frac{dt}{t}
\end{aligned}
    \]
with convergence in 
$L^2(\Omega; E_{-1}) = L^2(\Omega;\dot E_{-1}) + L^2(\Omega; E)$ (cf. \eqref{eq:dot}).
Defining the vectors $x_n\in L^2(\Omega; E)\cap L^2(\Omega; \dot E_{-1})$ by
\[    x_n := \sum_{j=1}^k r_j \int_{2^n}^{2^{n+1}} (-tA)^\dreihalb (1-tA)^{-3} Bh_j\, \frac{dt}{t}    \]
and setting $m_N(t) = (2^{-n} t)^{\einhalb}$ for $t\in [2^n,2^{n+1})$, $n=-N,\dots,N$, and 
$m_N(t) = 0$ for $t\not\in [2^{-N}, 2^{N+1})$, 
we obtain (relative to the spaces $X_0 = L^2(\Omega;\dot E_{-1})$ and
$X_1 = L^2(\Omega;E)$) 
\begin{align*}
\  & \hskip-.2cm \mathscr{C}_0((x_n)_{n\in \Z})^2
\\ & = \sup_{N\ge 1}\tilde\EE \Big\n \sum_{j=1}^k\sum_{n=-N}^N r_j \tilde r_n 2^{-\nhalb} 
\int_{2^n}^{2^{n+1}} \!\!(-tA)^\dreihalb (1-tA)^{-3} Bh_j\, \frac{dt}{t}\Big\n_{L^2(\Omega;\dot{E}_{-1})}^2
\\ & = \sup_{N\ge 1}\tilde \EE \Big\n \sum_{j=1}^k\sum_{n=-N}^N  r_j \tilde r_n \int_{2^n}^{2^{n+1}} 
\!\!(2^{-n} t)^{\einhalb}
(-tA)(1-tA)^{-3} Bh_j\, \frac{dt}{t}\Big\n_{L^2(\Omega;\dot E_{-\einhalb})}^2
\\ & {=}\sup_{N\ge 1}
 \tilde\EE \Big\n \sum_{j=1}^k\sum_{n=-N}^N  r_j \tilde r_n \int_0^\infty \!\!
m_N(t)(-tA)(1-tA)^{-3} \one_{(2^n, 2^{n+1})}(t)B h_j\, \frac{dt}{t}\Big\n_{L^2(\Omega;\dot E_{-\einhalb})}^2
\\ & \eqsim  \sup_{N\ge 1}
 \EE' \Big\n \sum_{j=1}^k\sum_{n=-N}^N  r_{jn}' 
\int_0^\infty \!\! m_N(t)(-tA)(1-tA)^{-3} \one_{(2^n, 2^{n+1})}(t)B h_j\, \frac{dt}{t}\Big\n_{\dot E_{-\einhalb}}^2.
\end{align*}
In the last step, property $(\alpha)$ was used to pass from double Rademacher sums 
(on $(\Omega,\P)\times (\tilde\Omega, \tilde \P)$) to doubly indexed Rademacher sums 
(on some other probability space 
$(\Omega',\P')$).
Now, estimating Rademacher sums in terms of Gaussian sums we have
\begin{align*}
 \ & \mathscr{C}_0((x_n)_{n\in \Z})^2
 \\ & \qquad {\eqsim}\sup_{N\ge 1}
\EE' \Big\n \sum_{j=1}^k\sum_{n=-N}^N  \gamma_{jn}' 
\int_0^\infty m_N(t)(-tA)(1-tA)^{-3} \one_{(2^n, 2^{n+1})}(t)B h_j\, \frac{dt}{t}\Big\n_{\dot E_{-\einhalb}}^2\\
\intertext{Since the functions  $\one_{(2^n, 2^{n+1})}\otimes h_j$ in $L^2(\R_+, \frac{dt}{t};H)$ are orthonormal 
(up to the numerical constant $(\ln 2)^\einhalb$), 
one may estimate the above right-hand side by
}
 & \qquad {\lesssim} \sup_{N\ge 1}\n t\mapsto m_N(t)\phi(-t A) 
B\n_{\gamma(L^2(\R_+, \frac{dt}{t};H),\dot E_{-\einhalb})}^2
\end{align*}
where $\phi\in H_0^\infty(\Sigma_\theta)$ is given by $\phi(z) = z/(1+z)^3$.
Finally, using the Kalton--Weis $\gamma$-multiplier theorem and the
$\gamma$-boundedness of the operators
$(-tA)^{\einhalb}(1-tA)^{-\dreihalb}$, $t>0$, (which follows from Proposition \ref{prop:gamma-bounded})
we conclude that
\begin{equation}\label{eq:C0}
\begin{aligned}
 \mathscr{C}_0((x_n)_{n\in \Z})^2
 &  {\lesssim} \n t\mapsto \phi(-t A) B\n_{\gamma(L^2(\R_+, \frac{dt}{t};H),\dot E_{-\einhalb})}^2
\\ &  {\lesssim} \n t\mapsto \psi(-t A) B\n_{\gamma(L^2(\R_+, \frac{dt}{t};H),\dot E_{-\einhalb})}^2
\end{aligned}
\end{equation}
with $\psi(z) = z^{\einhalb}/(1+z)^{\dreihalb}$.

Similarly,
\[
\begin{aligned}
& \hskip-.2cm\mathscr{C}_1((x_n)_{n\in \Z})^2
\\ & = \sup_{N\ge 1}\tilde\EE\Big\n \sum_{j=1}^k\sum_{n=-N}^N  r_j \tilde r_n 2^{\nhalb} 
\int_{2^n}^{2^{n+1}} (-tA)^\dreihalb (1-tA)^{-3} Bh_j\, \frac{dt}{t}\Big\n_{L^2(\Omega;E)}^2
\\ & = \sup_{N\ge 1}\tilde\EE \Big\n \sum_{j=1}^k\sum_{n=-N}^N  r_j \tilde r_n
\\ & \qquad\qquad \times\int_0^\infty (2^{-n} t)^{-\einhalb}(-tA)^2(1-tA)^{-3} 
\one_{(2^n, 2^{n+1})}(t) Bh_j\, \frac{dt}{t}\Big\n_{L^2(\Omega;\dot E_{-\einhalb})}
^2
\\ & \lesssim_E  \n t\mapsto \tilde\phi(-t A) B\n_{\gamma(L^2(\R_+, \frac{dt}{t};H),\dot E_{-\einhalb})}^2
\\ & \lesssim_E \n t\mapsto \psi(-t A) B\n_{\gamma(L^2(\R_+, \frac{dt}{t};H),\dot E_{-\einhalb})}^2
\end{aligned}
\]
with $\tilde \phi(z) = z^2/(1+z)^3$ and $\psi(z) =z^{\einhalb}/(1+z)^{\dreihalb}$ as before.

By Proposition~\ref{prop:RadInterp} and estimating Gaussian sums by Rademacher sums, this proves that
\begin{align*}
 \Big\n \sum_{j=1}^k \ga_j Bh_j\Big\n_{L^2(\Omega;\dot E_{-\einhalb})} 
&\eqsim_E \Big\n \sum_{j=1}^k r_j Bh_j\Big\n_{L^2(\Omega;\dot E_{-\einhalb})} 
\\ & \lesssim_E \n t\mapsto \psi(-t A) B\n_{\gamma(L^2(\R_+, \frac{dt}{t};H),\dot E_{-\einhalb})}.
\end{align*}
Taking the supremum over all finite orthonormal systems in $H$ and using that
$E$ has property $(\alpha)$ and therefore does not contain an isomorphic copy of $c_0$,
we obtain (using a theorem of Hoffmann-J\o rgensen and Kwapie\'n, see \cite[Theorem 4.3]{NeeCanberra}) 
that $B$ is $\gamma$-radonifying as an operator from $H$ into $\dot E_{-\einhalb}$ and
\[     \n B\n_{\gamma(H,\dot E_{-\einhalb})} \lesssim 
\n t\mapsto\psi(-t A) B\n_{\gamma(L^2(\R_+, \frac{dt}{t};H),\dot E_{-\einhalb})}.    \]

We have now proved the equivalences \ref{item:prop:AeinhalbB} $\Leftrightarrow$ \ref{item:prop:allpsi} 
$\Leftrightarrow$
\ref{item:prop:somepsi}.
 It remains to check that these equivalent 
conditions imply the norm equivalence \eqref{eqNorms}.
Let $\mu$ be the centred Gaussian
measure on $\dot E_{-\einhalb}$ associated with the $\ga$-radonifying operator $B\in\ga(H,\dot E_{-\einhalb})$.
Suppose $\phi,\tilde\phi\in H_0^\infty(\Sigma_\theta)$ are nonzero functions. By
\cite[Theorems 5.2, 5.3]{NW-alpha}, assertion \ref{item:main:existence-inv-measure} implies 
\begin{align*}
\n t\mapsto \phi(-tA)B\n_{\gamma(L^2(\R_+, \frac{dt}{t};H),\dot E_{-\einhalb})}
 & \eqsim\int_{\dot E_{\einhalb}}\n t\mapsto 
\phi(-tA)x\n_{\gamma(L^2(\R_+, \frac{dt}{t}),\dot E_{-\einhalb})}\,d\mu(x)
\\ & \stackrel{(1)}{\eqsim}\int_{\dot E_{\einhalb}}\n t\mapsto 
\tilde\phi(-tA)x\n_{\gamma(L^2(\R_+, \frac{dt}{t}),\dot E_{-\einhalb})}\,d\mu(x)
\\ & \eqsim \n t\mapsto \tilde\phi(-tA)B\n_{\gamma(L^2(\R_+, \frac{dt}{t};H),\dot E_{-\einhalb})}.
\end{align*}
Here, step $(1)$ follows from 
\cite[Proposition 7.7]{KaltonWeis:square-function-est}. The implied constants are independent 
of $\phi$ and $\tilde\phi$ under the normalisation as stated in the proposition. 
\end{proof}

\begin{remark}
The only step in the proof where we made use of the boundedness of the
functional calculus is the Rademacher interpolation argument. For all
other parts, $\ga$-sectoriality of angle less than $\pihalbe$ is
sufficient. However, one actually needs only the continuous embedding
$$ \langle L^2(\Om; E), L^2(\Om; \dot
E_{-1})\rangle_\einhalb  \hookrightarrow L^2(\Om;\dot E_{-\einhalb}) $$
instead of an equality. As in Proposition~\ref{prop:RadInterp}
this boils down to having the embedding for the underlying Banach spaces
$  \langle E, \dot E_{-1}\rangle_\einhalb \hookrightarrow \dot
E_{-\einhalb}$. An
inspection of the proof of
\cite[Theorems~4.1~and~7.4]{KaltonKunstmannWeis} shows that the latter
embedding does not require the full power of the boundedness of the
functional calculus but merely a (discrete dyadic) square function
estimate of the form
\[
\sup_{\epsilon_k=\pm 1} \Bigl\n \sum_k \epsilon_k \varphi(2^k
A^{\sharp})x \Bigr\n
\lesssim \n x\n
\]
for
some $\varphi \in H_0^\infty(\Sigma_\theta)$ for $\theta \in (0,
\pi)$, where $A^{\sharp}$ denotes the part of $A^\ast$ in $E^\sharp =
\overline{ \D( A^*) } \cap \overline{ \Ran(A^*) }$ (the closures
are taken in the strong
topology of $E^*$). 
These `dual' square function estimates match the hypothesis in
Le~Merdy's theorem on the Weiss conjecture
\cite[Theorem~4.1]{LeMerdy:weiss-conj} in the sense that Le~Merdy
treats observation operators and requires upper square function
estimates for $A$ whereas we treat control operators and therefore
need `dual' square function estimates. The construction of $A^\sharp$
instead of $A^*$ is needed when non-reflexive Banach spaces are
concerned. On reflexive spaces one has $A^\sharp{=} A^*$, and the explained
duality with Le~Merdy's result is more apparent.
\end{remark}

In the next lemma, $\widehat f$ denotes the Laplace transform of a function $f$.

\begin{lemma}[Laplace transforms]\label{lem:Laplace}
For all $f\in L^2(\R_+, \frac{dt}{t};H)$,
the function $Lf(t):= t \widehat f(t)$
belongs to $L^2(\R_+,\frac{dt}{t};H)$ and
\[    \n Lf\n_{L^2(\R_+,\frac{dt}{t};H)}
\le \n f \n_{L^2(\R_+,\frac{dt}{t};H)}.    \]
\end{lemma}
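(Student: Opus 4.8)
The plan is to show that $L$ is, up to the standard exponential change of variables that turns $\frac{dt}{t}$ into Lebesgue measure, a convolution operator whose kernel lies in $L^1$ with total mass $1$; equivalently, to verify Schur's test for it with the bounding function $h\equiv 1$. Either route gives the constant $1$ claimed in the lemma.

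\emph{Step 1 (well-definedness).} For $f\in L^2(\R_+,\frac{dt}{t};H)$ and fixed $t>0$, Cauchy--Schwarz gives
\[
\int_0^\infty e^{-ts}\n f(s)\n_H\,ds \le \Big(\int_0^\infty s\,e^{-2ts}\,ds\Big)^{1/2}\Big(\int_0^\infty \n f(s)\n_H^2\,\tfrac{ds}{s}\Big)^{1/2} = \frac{1}{2t}\,\n f\n_{L^2(\R_+,\frac{dt}{t};H)} < \infty ,
\]
so the defining integral of the Laplace transform converges absolutely for each $t>0$ and $Lf$ is a well-defined, measurable $H$-valued function.

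\emph{Step 2 (the kernel).} Write $Lf(t)=\int_0^\infty k(ts)\,f(s)\,\frac{ds}{s}$ with $k(r):=r\,e^{-r}$; thus $L$ is integration against the nonnegative kernel $(t,s)\mapsto k(ts)$ with respect to the symmetric measure $\frac{ds}{s}=\frac{dt}{t}$. Substituting $r=ts$ (for fixed $t$) and using $\int_0^\infty k(r)\,\frac{dr}{r}=\int_0^\infty e^{-r}\,dr=1$, one obtains $\int_0^\infty k(ts)\,\frac{ds}{s}=\int_0^\infty k(ts)\,\frac{dt}{t}=1$ for all $t,s>0$.

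\emph{Step 3 (conclusion).} For each $t>0$, Cauchy--Schwarz applied to the probability measure $k(ts)\,\frac{ds}{s}$ gives
\[
\n Lf(t)\n_H^2 = \Big\n\int_0^\infty k(ts)\,f(s)\,\tfrac{ds}{s}\Big\n_H^2 \le \int_0^\infty k(ts)\,\n f(s)\n_H^2\,\tfrac{ds}{s} ,
\]
and integrating this against $\frac{dt}{t}$ and invoking Tonelli's theorem together with Step 2 yields
\[
\int_0^\infty \n Lf(t)\n_H^2\,\tfrac{dt}{t} \le \int_0^\infty \n f(s)\n_H^2\Big(\int_0^\infty k(ts)\,\tfrac{dt}{t}\Big)\,\tfrac{ds}{s} = \int_0^\infty \n f(s)\n_H^2\,\tfrac{ds}{s} ,
\]
which is the assertion. (Equivalently, one may transport everything through $t=e^{v}$, $s=e^{-u}$, under which $L$ becomes convolution on $\R$ with the kernel $\phi(w)=e^{w}e^{-e^{w}}$, having $\n\phi\n_{L^1(\R)}=\int_0^\infty e^{-x}\,dx=1$, and then apply Young's inequality for $H$-valued convolutions.) There is no genuine obstacle here; the only points requiring a little care are the pointwise absolute convergence of the Laplace transform (Step 1) and the Fubini/change-of-variables bookkeeping behind the kernel mass computation of Step 2, both of which are routine.
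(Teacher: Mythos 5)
Your proof is correct and is essentially the paper's own argument: the probability measure $k(ts)\,\frac{ds}{s}$ of your Step 2 is exactly the measure $te^{-st}\,ds$ against which the paper applies Cauchy--Schwarz, followed by the same Fubini--Tonelli computation. The extra remarks (pointwise convergence, the convolution/Young reformulation) are fine but not a different method.
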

\begin{proof}
 By the Cauchy-Schwarz inequality,
\begin{align*}
\int_0^\infty  t^2 \n\widehat f(t)\n_H ^2\,\frac{dt}{t}
& = \int_0^\infty  \Big\n\int_0^\infty f(s)\, te^{-st}\,ds\Big\n_H^2\,\frac{dt}{t} \\
& \le \int_0^\infty  \int_0^\infty \n f(s)\n_H^2\, te^{-st}\,ds\,\frac{dt}{t} \\
& = \int_0^\infty  \int_0^\infty \n f(s)\n_H^2\, e^{-st}\,dt\, ds  =
\int_0^\infty \n f(s)\n_H^2\,\frac{ds}{s}. \qedhere
\end{align*}
\end{proof}

As a consequence, the mapping $L: f\mapsto Lf$ 
is a contraction on $L^2(\R_+,\frac{dt}{t};H)$.  By the Kalton--Weis
extension theorem, $L$ extends to a linear contraction on the space $
\gamma(L^2(\R_+,\frac{dt}{t};H),E)$, for any Banach space $E$.

\begin{proof}[Proof of the equivalences {\rm
    \ref{item:main:existence-inv-measure} $\Leftrightarrow$
    \ref{item:main:A-onehalf-B-in-gamma} $\Leftrightarrow$
    \ref{item:main:weiss-condition}} of
  Theorem~\ref{thm:equivalence}.]

\medskip

\ref{item:main:existence-inv-measure} $\Rightarrow$
\ref{item:main:A-onehalf-B-in-gamma}: By assumption, $t\mapsto S(t)B$
belongs to $\gamma(L^2(\R_+;H),E)$. It follows that $t\mapsto
\eta(-tA)B$ belongs to $ \gamma(L^2(\R_+,\frac{dt}{t};H),\dot
E_{-\einhalb})$, with $\eta(z) = z^\einhalb \exp(-z)$.  The Laplace
transform of $t\mapsto (tz)^\einhalb \exp(-tz)$ equals $\la\mapsto
\einhalb \sqrt{\pi} z^\einhalb (\lambda+z)^{-\dreihalb}$.  Hence, by
\cite[Lemma 9.12]{KunstmannWeis:Levico} or by using the Phillips
calculus (see \cite{Haase:Buch}),
$$\einhalb\sqrt{\pi}(-A)^\einhalb(\la-A)^{-\dreihalb}B = \int_0^\infty e^{-\la t} (-tA)^{\einhalb} S(t)B\,dt,$$
or, equivalently,
$$\einhalb\sqrt{\pi} (-A/\la)^\einhalb(1-A/\la)^{-\dreihalb}B = \la\int_0^\infty e^{-\la t} \eta(-tA)B\,dt.$$
By Lemma~\ref{lem:Laplace} and the remark following it, we obtain that
$ \la \mapsto (-A/\la)^\einhalb(1-A/\la)^{-\dreihalb}B$ belongs to 
$ \gamma(L^2(\R_+,\frac{d\la}{\la};H),\dot E_{-\einhalb})$.
Upon substituting $1/\la =\mu$ we find that
$\mu\mapsto \psi(-\mu A)B$ belongs to $ \gamma(L^2(\R_+,\frac{d\mu}{\mu};H),\dot E_{-\einhalb})$ with
$\psi(z) = z^{\einhalb}/(1+z)^{\dreihalb}$.
Now \ref{item:main:A-onehalf-B-in-gamma} follows as an application of Proposition~\ref{prop}.

\ref{item:main:A-onehalf-B-in-gamma} $\Rightarrow$ \ref{item:main:weiss-condition}:
From Proposition~\ref{prop} we get that
$t\mapsto (-tA)^{\einhalb} (1-tA)^{-1}B$ belongs to
$\gamma(L^2(\R_+,\frac{dt}{t};H), \dot E_{-\einhalb})$, or equivalently, that
$t\mapsto t^{\einhalb} (1-tA)^{-1}B$ belongs to
$\gamma(L^2(\R_+,\frac{dt}{t};H), E)$. Substituting $t = 1/s$ 
we obtain that $s\mapsto s^{\einhalb}(s-A)^{-1}B $ belongs to
$\gamma(L^2(\R_+,\frac{dt}{t}; H), E)$.

\ref{item:main:weiss-condition} $\Rightarrow$ \ref{item:main:A-onehalf-B-in-gamma}: By
substituting $t = 1/s$ the assumption implies that $s\mapsto s^{\einhalb}(1-sA)^{-1}B $ belongs to
$\gamma(L^2(\R_+,\frac{dt}{t}; H), E)$, or equivalently, that $s\mapsto (-sA)^{\einhalb}(1-sA)^{-1}B $ belongs to
$\gamma(L^2(\R_+,\frac{dt}{t};H), \dot E_{-\einhalb})$. Then by the $\gamma$-multiplier lemma 
(using that the operators
$(1-sA)^{-{\einhalb}}$, $s>0$, are $\gamma$-bounded by Proposition \ref{prop:gamma-bounded}),
we obtain that assumption \ref{item:prop:somepsi} of Proposition~\ref{prop} is satisfied.

\smallskip
\ref{item:main:A-onehalf-B-in-gamma} $\Rightarrow$ \ref{item:main:existence-inv-measure}: By Proposition~\ref{prop},
$t\mapsto (-tA)^{\einhalb}\exp(tA) B = (-tA)^{\einhalb}S(t)B$ belongs to
$\ga(L^2(\R_+,\frac{dt}{t};H),\dot E_{-\einhalb})$. This is equivalent to
saying that $t\mapsto S(t)B$ belongs to $\ga(L^2(\R_+;H),E)$.
\end{proof}

For the proofs of the implications \ref{item:main:A-onehalf-B-in-gamma} $\Rightarrow $ 
\ref{item:main:dyadic-condition} $\Rightarrow$ \ref{item:main:weiss-condition}   
we need some further preparations. 

An interval in $\R_+$ will be called {\em dyadic} (with respect to the measure $\frac{dt}{t}$) 
if it is of the form $[2^{k/2^M}, 2^{(k+1)/2^M})$ with $M\in\N$ and $k\in\Z$.
 
\begin{lemma}\label{lem:dyadische-mittel}
Let $-A\in S(E)$ be $\ga$-sectorial 
and let $I_1,\dots,I_N$ be 
dyadic intervals. 
For any choice of the numbers $s_n, t_n\in I_n$ 
we have the equivalence
\begin{align*}
 \Big\n \sum_{n\in F} \gamma_n   s_n^{\einhalb} R(s_n,A) B\Big\n_{L^2(\Omega;\ga(H,E))}
\eqsim
 \Big\n \sum_{n\in F} \gamma_n  t_n^{\einhalb} R(t_n,A) B\Big\n_{L^2(\Omega;\ga(H,E))}
\end{align*}
with constants independent of the finite subset $F\subseteq \Z$, the intervals $I_n$, and the choice of $s_n,t_n$.
\end{lemma}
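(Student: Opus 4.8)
The plan is to write, for each index $n$, the operator $t_n^{\einhalb}R(t_n,A)B$ as $M_n$ applied to $s_n^{\einhalb}R(s_n,A)B$ (and symmetrically), where $M_n\in\calL(E)$, then to show that $\{M_n:n\in F\}$ is $\gamma$-bounded with a constant depending on none of the data, and finally to transfer this $\gamma$-boundedness to the operators of left composition with $M_n$ on $\gamma(H,E)$. For the first point: the resolvent identity $R(t_n,A)-R(s_n,A)=(s_n-t_n)R(t_n,A)R(s_n,A)$ gives $R(t_n,A)=[I+(s_n-t_n)R(t_n,A)]R(s_n,A)$. Reading this on $E_{-1}$ — using that $R(s_n,A)=R(s_n,A_{-1})$ maps $E_{-1}$ into $\D(A_{-1})=E$, where $R(t_n,A_{-1})$ restricts to $R(t_n,A)\in\calL(E)$, as explained after Lemma~\ref{lem:resolvB} — we obtain, with $M_n:=(t_n/s_n)^{\einhalb}\bigl[I+(s_n-t_n)R(t_n,A)\bigr]\in\calL(E)$, the identity of bounded operators $t_n^{\einhalb}R(t_n,A)B=M_n\bigl(s_n^{\einhalb}R(s_n,A)B\bigr)$ from $H$ to $E$, and likewise $s_n^{\einhalb}R(s_n,A)B=M_n'\bigl(t_n^{\einhalb}R(t_n,A)B\bigr)$ with $M_n'$ defined by exchanging the roles of $s_n$ and $t_n$. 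In particular $R(s_n,A)B\in\gamma(H,E)$ if and only if $R(t_n,A)B\in\gamma(H,E)$, so we may assume all operators in sight are $\gamma$-radonifying; otherwise both sides of the claimed equivalence equal $+\infty$.

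The crux is the uniform $\gamma$-boundedness of $\{M_n:n\in F\}$, and this is where the dyadic structure enters. If $s_n$ and $t_n$ lie in one and the same dyadic interval $[2^{k/2^{M}},2^{(k+1)/2^{M}})$, whose endpoint ratio $2^{1/2^{M}}$ is at most $2$, then $t_n/s_n\in(\tfrac12,2)$, and $|s_n-t_n|\le 2^{(k+1)/2^{M}}-2^{k/2^{M}}=2^{k/2^{M}}(2^{1/2^{M}}-1)\le 2^{k/2^{M}}\le t_n$, so $|s_n-t_n|/t_n\le 1$. Writing $M_n=a_nI+b_n\bigl(t_nR(t_n,A)\bigr)$ with $a_n=(t_n/s_n)^{\einhalb}$ and $b_n=(t_n/s_n)^{\einhalb}(s_n-t_n)/t_n$, we therefore have $|a_n|,|b_n|\le\sqrt2$ for every $n$. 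Now $\{t_nR(t_n,A):n\in F\}\subseteq\{tR(t,A):t>0\}=\{z(z+A)^{-1}:z\in(-\infty,0)\}$, which is $\gamma$-bounded since $-A$ is $\gamma$-sectorial; as $\gamma$-boundedness is stable under multiplication by uniformly bounded scalar sequences and under finite sums, $\{M_n:n\in F\}$ is $\gamma$-bounded with $\gamma$-bound at most $\sqrt2\,(1+K)$, where $K$ is the $\gamma$-bound of $\{tR(t,A):t>0\}$ — independently of $F$, of the intervals $I_n$, and of the points $s_n,t_n$. The same reasoning applies to $\{M_n':n\in F\}$.

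To finish, let $\mathscr{M}_n\in\calL(\gamma(H,E))$ denote the operator $T\mapsto M_nT$. Then $\{\mathscr{M}_n:n\in F\}$ is $\gamma$-bounded in $\calL(\gamma(H,E))$ with the same $\gamma$-bound as $\{M_n:n\in F\}$ in $\calL(E)$: computing the $\gamma(H,E)$-norm of a Gaussian sum $\sum_n\gamma_nM_nT_n$ by means of an independent Gaussian sequence $(\gamma'_j)$ and an orthonormal basis $(h_j)$ of (a separable subspace of) $H$, and conditioning on $(\gamma'_j)$, the inequality reduces to the $\gamma$-boundedness of $\{M_n\}\subseteq\calL(E)$ applied to the vectors $\sum_j\gamma'_jT_nh_j\in E$, followed by integration in $\gamma'$; the essential point — in contrast with the proof of Proposition~\ref{prop} — is that $M_n$ carries only the outer index $n$, so property $(\alpha)$ is not needed here. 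With $T_n:=s_n^{\einhalb}R(s_n,A)B\in\gamma(H,E)$, the defining inequality of $\gamma$-boundedness of $\{\mathscr{M}_n\}$ then yields
\begin{align*}
\Bignorm{\sum_{n\in F}\gamma_n\,t_n^{\einhalb}R(t_n,A)B}_{L^2(\Omega;\gamma(H,E))}
&=\Bignorm{\sum_{n\in F}\gamma_n\,\mathscr{M}_nT_n}_{L^2(\Omega;\gamma(H,E))}\\
&\lesssim\Bignorm{\sum_{n\in F}\gamma_n\,s_n^{\einhalb}R(s_n,A)B}_{L^2(\Omega;\gamma(H,E))},
\end{align*}
and the reverse inequality follows verbatim with $M_n'$ in place of $M_n$. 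I expect the main obstacle to be the uniform $\gamma$-boundedness in the second step: one has to read off precisely the right multiplier $M_n$ from the resolvent identity and observe that dyadicity provides exactly the bounds $t_n/s_n<2$ and $|s_n-t_n|/t_n\le1$ needed to control $\{M_n\}$ through the single $\gamma$-bounded family $\{tR(t,A):t>0\}$; the passage to $\gamma(H,E)$ \emph{without} property $(\alpha)$ in the last step is the other point requiring care.
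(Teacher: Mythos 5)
Your proof is correct and follows essentially the same route as the paper's: the resolvent identity expresses one resolvent in terms of the other, dyadicity bounds the resulting scalar coefficients, $\gamma$-sectoriality gives the $\gamma$-boundedness of $\{tR(t,A):t>0\}$, and the multiplier estimate is transferred to Gaussian sums in $\gamma(H,E)$ (the paper estimates the difference $s_n^{\einhalb}R(s_n,A)B-t_n^{\einhalb}R(t_n,A)B$ and concludes by the triangle inequality, whereas you package the same information into a single multiplier $M_n$, which is only a cosmetic difference). Your explicit justification that the left-composition operators are $\gamma$-bounded on $\gamma(H,E)$ without invoking property $(\alpha)$ is a correct and welcome elaboration of a step the paper leaves implicit.
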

\begin{proof}
First note that, since $I_n$ is dyadic,
$|s_n^\einhalb\pm t_n^\einhalb|\le 4\max\{s_n^\einhalb, t_n^\einhalb\}$.

We have, using the resolvent identity, the $\ga$-boundedness of the operators $t R(t,A)$ for $t>0$, 
and the contraction principle,
\begin{align*}
&\Big\n\sum_{n\in F} \gamma_n(s_n^{\einhalb} R(s_n,A)-t_n^{\einhalb} R(t_n,A))B\Big\n_{L^2(\Omega;\ga(H,E))}\\
&\quad \le\Big\n\sum_{n\in F} 
\gamma_n\frac{t_n-s_n}{t_n^{\einhalb}s_n^{\einhalb}}s_nR(s_n,A)t_n^{\einhalb}R(t_n,A)B\Big\n_{L^2(\Omega;\ga(H,E))}\\
&\quad \quad +\Big\n\sum_{n\in F} 
\gamma_n\frac{s_n^{\einhalb}-t_n^{\einhalb}}{t_n^{\einhalb}}t_n^{\einhalb}R(t_n,A)B\Big\n_{L^2(\Omega;\ga(H,E))}\\
&\quad \lesssim \Big\n\sum_{n\in F} \gamma_nt_n^{\einhalb} R(t_n,A)B\Big\n_{L^2(\Omega;\ga(H,E))}.
\end{align*}
By the triangle inequality in $L^2(\Omega;\ga(H,E))$ it then follows that
$$\Big\n\sum_{n\in F} \gamma_ns_n^{\einhalb} R(s_n,A)B\Big\n_{L^2(\Omega;\ga(H,E))}
\lesssim \Big\n\sum_{n\in F} \gamma_nt_n^{\einhalb} R(t_n,A)B\Big\n_{L^2(\Omega;\ga(H,E))}.$$
The converse inequality is obtained by reversing the roles of $s_n$ and $t_n$. 
\end{proof}

\begin{lemma}\label{lem:Poisson}
Let $f:\Sigma_\theta\to H$ be a bounded analytic function and suppose that, for some $0<\eta<\theta$,
the functions $t\mapsto f(e^{\pm i\eta}t)$ belong to $L^2(\R_+, \frac{dt}{t};H)$.
Then $$\sum_{n\in\Z} \n f(2^n)\n_H^2 < \infty.$$ 
\end{lemma}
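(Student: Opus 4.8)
The plan is to exploit the fact that $f$ is bounded and analytic on the sector $\Sigma_\theta$, so that its values on the ray $\R_+$ are controlled — in a mean sense — by its values on the two boundary rays $e^{\pm i\eta}\R_+$ of the subsector. The bridge between these is a Poisson-type reproducing formula: for a bounded analytic function on a sector, the value at a point on the positive axis can be recovered (or at least estimated) from an integral of its boundary values against a Poisson kernel for the sector. After the logarithmic change of variables $t = e^{s}$ (which turns $\R_+$ with $\tfrac{dt}{t}$ into $\R$ with Lebesgue measure and the sector into a strip $\{|\Im z|<\eta\}$, with $\Sigma_\theta$ becoming $\{|\Im z|<\theta\}$), the statement becomes: a bounded analytic function $g$ on the strip $\{|\Im z|<\theta\}$ whose traces $g(\,\cdot\,\pm i\eta)$ lie in $L^2(\R;H)$ satisfies $\sum_{n\in\Z}\|g(n\ln 2)\|_H^2<\infty$.

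First I would reduce to the scalar case by duality: it suffices to bound $\sum_n |\langle f(2^n),h^*\rangle|^2$ uniformly over $\|h^*\|_{H^*}\le 1$, since $H$ is a Hilbert space; equivalently one works with the scalar bounded analytic function $z\mapsto\langle f(z),h^*\rangle$. Next I would invoke the Poisson representation for the strip $\{|\Im z|<\eta\}$: for $x\in\R$,
\[
g(x) = \int_\R P_\eta(x-u)\,g(u+i\eta)\,du + \int_\R P_\eta(x-u)\,g(u-i\eta)\,du
\]
for a suitable Poisson kernel $P_\eta\in L^1(\R)$ (this uses that $g$ is bounded, so there is no contribution from infinity). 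Thus $g|_\R$ is a sum of convolutions of $L^1$ kernels with the two $L^2$ boundary traces, hence $g|_\R\in L^2(\R;H)$ by Young's inequality, with norm controlled by $\|g(\,\cdot\,+i\eta)\|_2+\|g(\,\cdot\,-i\eta)\|_2$.

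Finally, to pass from $g|_\R\in L^2$ to summability of the sampled values $g(n\ln 2)$, I would again use boundedness and analyticity to get a local sup bound: for each $n$, $\|g(n\ln 2)\|_H$ is bounded by a constant times $\big(\int_{|x-n\ln 2|<\delta}\|g(x)\|_H^2\,dx\big)^{1/2}$ for a fixed small $\delta>0$ (e.g. via the sub-mean-value property of $|g|^2$ on a disc contained in the strip, or Cauchy's integral formula on a small circle around $n\ln 2$ together with the $L^2$ bound on slightly shifted horizontal lines, which in turn follow from the Poisson formula above applied at those heights). Summing the squares over $n\in\Z$ and using that the intervals $(n\ln 2-\delta,n\ln 2+\delta)$ have bounded overlap yields $\sum_n\|g(n\ln 2)\|_H^2\lesssim \|g|_\R\|_{L^2(\R;H)}^2<\infty$, which after undoing the substitution is exactly $\sum_n\|f(2^n)\|_H^2<\infty$.

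The main obstacle is the last step — converting an $L^2$ bound on a line into pointwise control of a lattice of samples — since it genuinely needs analyticity (the conclusion is false for merely $L^2$ functions). The cleanest route is a Cauchy-estimate / sub-mean-value argument on discs of fixed radius $r<\eta$ centered at the sample points, combined with the observation that $\|g\|_{L^2}$ is finite not just on $\R$ but uniformly on every horizontal line at height $<\eta$ (again from the Poisson formula, or from a three-lines-type interpolation between the two boundary traces and the $L^\infty$ bound). Care is also needed to make the Poisson representation rigorous for a merely bounded (not decaying) analytic function on the strip; this is standard but should be stated, e.g. by noting that the difference between $g$ and its Poisson integral is a bounded harmonic function on the strip vanishing on the boundary, hence identically zero by the maximum principle.
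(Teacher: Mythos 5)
Your argument is essentially the proof in the paper: pass to the strip via $t=e^{s}$, use the Poisson formula for the strip to obtain a uniform $L^2$ bound on all horizontal lines $\Im z=\zeta$ with $|\zeta|<\eta$ (hence $g\in L^2(S_\eta)$), and then control each sample $g(n\ln 2)$ by an area mean over a disc of fixed radius $\delta<\eta$ centred at $n\ln 2$; since these discs are disjoint, summing gives $\sum_{n}\|g(n\ln 2)\|^2\lesssim \delta^{-2}\|g\|_{L^2(S_\eta)}^2$. The paper does exactly this (mean value property on the discs $Q_n$ plus Bessel's inequality for the normalised indicators), and your intermediate step $g|_{\R}\in L^2(\R;H)$ via Young's inequality is harmless but not needed. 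The one step that is wrong as stated is the reduction to the scalar case by duality: $\sup_{\|h^*\|\le 1}\sum_{n}|\langle f(2^n),h^*\rangle|^2$ equals the square of the \emph{operator} norm of the map $\ell^2\to H$, $e_n\mapsto f(2^n)$, whereas the conclusion requires the Hilbert--Schmidt norm $\sum_n\|f(2^n)\|_H^2$; a uniform bound over unit functionals therefore does not imply the desired summability. This is easily repaired: either expand $f$ with respect to an orthonormal basis of $H$ and apply the scalar estimate coordinatewise, summing the resulting bounds (this is what the paper does), or run your disc argument directly for $H$-valued $g$, using that $\|g\|_H^2$ is subharmonic so the sub-mean-value inequality over discs holds verbatim. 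With that correction the proof is complete and coincides with the paper's.
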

\begin{proof}
  Since $f$ is continuous we may suppose that $H$ is separable. By
  expanding the values of $f$ with respect to an orthonormal basis
  in $H$, it suffices to prove the lemma for
  the case $H$ equals the scalar field. 

  By considering $g(z) = f(\exp(z))$, we may reformulate the problem on
  the strip $S_\theta = \{z\in\C: \ |\Im z| < \theta\}$.  The
  objective is then to show that if the restriction of a bounded
  analytic function $g$ on $S_\theta$ to the lines $\Im z = \pm \eta$
  belongs to $L^2(\R)$, then $\sum_{n\in\Z} |g(n \ln 2)|^2 < \infty.$ 
  The proof of this uses the following standard technique.  By the Poisson
  formula for the strip we have 
\[
\sup_{|\zeta|<\eta} \big\n g|_{\{\Im z = \zeta\}}\big\n_2 < \infty
\]
and therefore  $g|_{S_\eta} \in L^2(S_\eta)$.  For $0<\delta<\eta$  
consider the discs  
\[
Q_n = \{z\in  \C: |z - n\ln 2|<\delta\}, \qquad n\in\Z,
\]
centred around $n \in \ZZ$. Taking $\delta$ small enough, the
functions $\phi_n = |Q_n|^{-\einhalb} \one_{Q_n}$ have disjoint
support and are hence orthonormal in $L^2(S_\eta)$. By the mean value
theorem we obtain 
\begin{align*} \sum_{n\in\Z} |g(n\ln 2)|^2
& = \sum_{n\in\Z} \Big|\frac1{|Q_n|}\int_{Q_n} g(x+iy)\,dx\,dy\Big|^2
\\ & =  \frac1{\pi \delta^2} 
       \sum_{n\in\Z} \Big|\int_{S_\eta} g(x+iy) \phi_n(x+iy) \,dx\,dy  \Big|^2
\\ & \le \frac1{\pi \delta^2} 
        \bigl\n g|_{S_\eta}\bigr\n_{L^2(S_\eta)}^2. \qedhere
\end{align*}
\end{proof}

This lemma can be restated as saying that the mapping $f\mapsto (f(2^n))_{n\in\Z}$
is bounded from the weighted  Hardy space $H^2(\Sigma_\eta, \mu;H)$ to $\ell^2(H)$,
where $\mu$ is the image on the sector 
$\Sigma_\eta$ of the Lebesgue measure on the strip 
$S_\eta$ under the exponential mapping;
note that Lebesgue measure on horizontal lines in the strip $S_\eta$
is mapped to the measure $dt/t$ on rays emanating from the origin in the sector $\Sigma_\eta$. 

By the Kalton--Weis extension theorem, this mapping extends to a
bounded operator from $\ga(H^2(\Sigma_\eta, \mu;H),E)$ to
$\ga(\ell^2(H),E)$, for any Banach space $E$.  This is what will be
needed below.

\begin{proof}[End of the proof of Theorem~\ref{thm:equivalence}]
We shall now prove the remaining implications \ref{item:main:A-onehalf-B-in-gamma} $\Rightarrow $ 
\ref{item:main:dyadic-condition} $\Rightarrow$ \ref{item:main:weiss-condition}.
   
We begin with the proof of \ref{item:main:A-onehalf-B-in-gamma} $\Rightarrow$ \ref{item:main:dyadic-condition}.
First of all, Lemma \ref{lem:resolvB} implies that $R(t,A)B \in \gamma(H,E)$ for all $t>0$. 
By the implication \ref{item:main:A-onehalf-B-in-gamma} $\Rightarrow$ \ref{item:main:weiss-condition} applied to the
operators $e^{\pm i\theta}A$ for a sufficiently small $\theta>0$ we find that the 
functions $$t \mapsto t^\einhalb R(t,e^{\pm i\theta}A)B = 
 e^{\mp i\theta}\,t^\einhalb R(te^{\mp i\theta},A)B$$
belong to $\ga( L^2(\R_+,\frac{dt}{t};H), E)$. 
By Lemma \ref{lem:Poisson} and the remark following it, 
we obtain that the sequence $(2^{\nhalb}R(2^n,A)B)_{n\in \Z}$ belongs to $\ga(\ell^2(H),E)$.
But this is the same as saying that \ref{item:main:dyadic-condition} holds.

\smallskip
We turn to the proof of
\ref{item:main:dyadic-condition} $\Rightarrow$
\ref{item:main:weiss-condition}.
Let $S_{nm}^{(M)}$ denote the average of $t^{\einhalb}R(t,A)$ (with
respect to $dt/t)$
over the dyadic interval $I_{nm}^{(M)} = [ 2^{n+m 2^{-M}},
2^{n+(m+1)2^{-M}})$.
Let $t_{nm}^{(M)} = 2^{n+m 2^{-M}}$ be the left endpoint of the interval
$I_{nm}^{(M)}$. Then
\begin{align*}
S_{nm}^{(M)}
& = \fint_{I_{nm}^{(M)}} t^{\einhalb} R(t,A) B  \frac{dt}{t}
\\ & = \fint_{I_{nm}^{(M)}} t^{\einhalb} \big( R(t, A)(t_{nm}^{(M)} - A)
\big) R(t_{nm}^{(M)},A) B  \frac{dt}{t}\\
& = \Big( \fint_{I_{nm}^{(M)}} \frac{ t^\einhalb}{ (t_{nm}^{(M)})^\einhalb} \big(
   \frac{t_{nm}^{(M)}}{t} \cdot tR(t, A) - A R(t, A) \big)
 \frac{dt}{t}\Big) \circ [(t_{nm}^{(M)})^{\einhalb}R(t_{nm}^{(M)},A) B]\\
& =: U_{nm}^{(M)} \circ [(t_{nm}^{(M)})^{\einhalb}R(t_{nm}^{(M)},A) B].
\end{align*}
Since $t/ t_{nm}^{(M)} \in [1, 2]$ on $I_{nm}^{(M)}$, the operators
$U_{nm}^{(M)}$ belong (up to a constant) to the closure of the absolute
convex hull of $\{ A R(t,A),\, tR(t,A)
\suchthat t>0\}$. By $\ga$-sectoriality of $A$ (which follows from Proposition 
\ref{prop:gamma-bounded}) this
family is $\ga$-bounded.

Fix a finite set $F\subseteq \Z$. Then,
\begin{align*}
\ & \Big\n  \sum_{n\in F}
\sum_{m=0}^{2^{M}-1}   \one_{I_{nm}^{(M)} }\otimes
S_{nm}^{(M)}B\Big\n_{\ga(L^2(\R_+,\frac{dt}{t};H),E)}
\\ & \qquad \stackrel{(1)}{\eqsim} \Big\n  \sum_{n\in F}
\sum_{m=0}^{2^{M}-1}   \one_{I_{nm}^{(M)} }\otimes S_{nm}^{(M)}B
\Big\n_{\ga(L^2(\R_+,\frac{dt}{t}),\ga(H,E))}
\\ & \qquad \stackrel{(2)}{\eqsim} \frac1{2^{\Mhalb}} \Big\n \sum_{n\in
F} \sum_{m=0}^{2^{M}-1}  \gamma_{nm}
S_{nm}^{(M)}B\Big\n_{L^2(\Omega;\ga(H,E))}
\\ & \qquad \stackrel{(3)}{\lesssim} \frac1{2^{\Mhalb}}  \Big\n \sum_{n\in
F} \sum_{m=0}^{2^{M}-1} \gamma_{nm}
(t_{nm}^{(M)})^\einhalb R(t_{nm}^{(M)},A) B\Big\n_{L^2(\Omega;\ga(H,E))}
\\ & \qquad \stackrel{(4)}{\eqsim} \frac1{2^{\Mhalb}}  \Big\n \sum_{n\in
F} \sum_{m=0}^{2^{M}-1} \gamma_{nm}  2^{\nhalb} R(2^n,A)
B\Big\n_{L^2(\Omega;\ga(H,E))}
\\ & \qquad \stackrel{(5)}{=} \Big\n \sum_{n\in F}\sum_{m=0}^{2^{M}-1}
\one_{I_{nm}^{(M)} }\otimes 2^{\nhalb} R(2^n,A)
B\Big\n_{\ga(L^2(\R_+,\frac{dt}{t}),\ga(H,E))}
\\ & \qquad = \Big\n \sum_{n\in F} \one_{I_{n} }\otimes 2^{\nhalb} R(2^n,A)
B\Big\n_{\ga(L^2(\R_+,\frac{dt}{t}),\ga(H,E))}
\\ & \qquad \stackrel{(6)}{\eqsim} \Big\n \sum_{n\in F} \gamma_n
2^{\nhalb} R(2^n,A) B\Big\n_{L^2(\Omega;\ga(H,E))}
\end{align*}
with implicit constants independent of $F$ and $M$. In this
computation, (1) follows from property $(\alpha)$; (2), (5), (6) from
the identity \eqref{eq:finite-rank} along with the fact that the
dyadic interval $I_{nm}^{(M)}$ has
$dt/t$-measure $\eqsim 2^{-M}$;
Estimate (3) follows from the $\ga$-boundedness of the operators
$U_{nm}^{(M)}$;
and (4) from
Lemma \ref{lem:dyadische-mittel} applied to the points $s_n{=}2^n$ and
$t_{nm}^{(M)}$ in $I_n = [2^n, 2^{n+1})$.

By the $\gamma$-Fatou lemma (see \eqref{eq:gFatou}),
the above estimate
implies \ref{item:main:weiss-condition}. 
\end{proof}

\let\H\trueH
\bibliographystyle{plain}
\bibliography{stochweiss}

\end{document}